\begin{document}

\title[]{Moduli of K3 surfaces of degree 2
with four rational double points of type $D_4$}
\author[K.~Ueda]{Kazushi Ueda}
\address{
Graduate School of Mathematical Sciences,
The University of Tokyo,
3-8-1 Komaba,
Meguro-ku,
Tokyo,
153-8914,
Japan.}
\email{kazushi@ms.u-tokyo.ac.jp}
\date{}
\pagestyle{plain}

\begin{abstract}
We show that
the Satake--Baily--Borel compactification
of the moduli space
of lattice polarized K3 surfaces
parametrizing K3 surfaces of degree 2
with four rational double points of type $D_4$
is the projective 3-space.
We also show that
the corresponding graded ring of automorphic forms
is generated by four elements of weight 2
and one element of weight 11
with one relation of weight 22.
\end{abstract}

\maketitle

\section{Introduction}

It is a classical fact
that the maximum number of nodes
of a quartic surface in $\bP^3$
is sixteen.
Indeed,
if $Y$ is a quartic surface with a node $p$,
then the projection from $p$
gives a double cover of $\bP^2$.
The branch locus is a sextic in $\bP^2$,
and the set of nodes of $Y$ other than $p$ is bijective
with the set of nodes of the sextic.
The maximum number of nodes of a sextic is fifteen,
which is attained if the sextic is the union of six lines
no three of which intersect at one point.
The double cover of $\bP^2$ branched along six lines
comes from a quartic surface with sixteen nodes
if and only if no three lines intersect at one point
and all lines are tangent to a smooth conic.

A quartic surface with sixteen nodes is called a \emph{Kummer quartic surface}.
The moduli space of Kummer quartic surfaces,
equipped with some additional combinatorial data
(which is equivalent to a level 2 structure
on the corresponding principally polarized Abelian surface,
which in turn is equivalent to the choice of a total order
on the set of six Weierstrass points of the corresponding genus 2 curve),
is an open subset of the Igusa quartic.
The Igusa quartic,
also known as the Castelnuovo--Richmond quartic
(see \cite{MR2964027}),
is the Satake--Baily--Borel compactification
of the Siegel modular variety
$\cA_2(2)$
% which is the moduli space of principally polarized Abelian surfaces
% with level 2 structures
\cite{MR168805}.
% The projective dual of the Igusa quartic is the Segre cubic,
% which is the unique cubic hypersurface in $\bP^4$
% with the maximum number of nodes
% (which is ten).
The Igusa quartic contains fifteen singular lines,
and for a smooth point on the Igusa quartic,
the corresponding Kummer quartic surface is the intersection
of the Igusa quartic and its tangent space at that point,
which has one node at that point
and fifteen other nodes at the intersection points with singular lines.

In this paper,
we classify Gorenstein K3 surfaces
% (i.e., Gorenstein surfaces
% whose minimal models are K3 surfaces)
of degree 2
with the maximum number of rational double points of type $D_4$.
Since the signature of the K3 lattice $\LambdaK$
is $(3,19)$,
this number cannot be more than four,
and it is easy to find a Gorenstein K3 surface of degree 2
with four rational double points of type $D_4$.
Let $P$ be
the Picard lattice of the minimal resolution
of a very general Gorenstein K3 surface of degree 2
with four rational double points of type $D_4$,
and $Q$ be the 
orthogonal complement of $P$
in the K3 lattice $\LambdaK$.
Instead of the subspace of the moduli space of K3 surfaces of degree 2,
% \cite{MR595204},
we consider the moduli space of
K3 surfaces
polarized by the lattice $P$
in the sense of Nikulin
(see \pref{df:Nikulin}),
which is an orthogonal modular variety
for the lattice $Q$.
% Our choice for the lattice is described in \pref{sc:lattices}.
The main result is \pref{th:main},
which describes this modular variety
and the corresponding graded ring of automorphic forms.
The proof of \pref{th:main} occupies
Sections
\ref{sc:elliptic},
\ref{sc:H10},
\ref{sc:P3},
and
\ref{sc:proof}.
% In \pref{sc:invariant ring},
% we identify the ring $A(\rO(Q))$
% of automorphic forms for $\rO(Q)$
% with the invariant ring
% $
% A \lb \rOt(Q) \rb^{G_{72}}
% $
% where
% $
% G_{72} \coloneqq \rO(Q) / \rOt(Q).
% $
In \pref{sc:2+E8+E8},
we classify K3 surfaces of degree 2
with two rational double points of type $E_8$.
This gives a proof
of a theorem of Igusa
\cite{MR168805}
describing
the graded ring of Siegel modular forms
of genus 2
in terms of generators and relations.
Note that the relation between
lattice polarized K3 surfaces
and Siegel modular forms
is also discussed
in \cite{MR2935386}
using the same lattice
and a different model
(i.e.,
a family of quartic surfaces
which is related to the family
appearing in this paper
by a birational map).

% \subsection*{Conventions}
% We work over the field $\bC$ of complex numbers.
% We identify line bundles and linear equivalence classes of Cartier divisors
% throughout this paper.

\section{Gorenstein K3 surfaces of degree 2}

A \emph{Gorenstein K3 surface of degree 2} is a pair $(Y,\cL)$
consisting of a Gorenstein projective surface $Y$
and an ample line bundle $\cL$ of degree 2 on $Y$
such that the minimal model $\mu \colon \Yt \to Y$ is a K3 surface.
Set $\cLt \coloneqq \mu^* \cL$.
If $(Y, \cL)$ is a Gorenstein K3 surface of degree 2,
then it follows from \cite{MR330172}
(see also \cite{MR595204})
that either
\begin{itemize}
\item
$|\cL|$ is base point free and
$\varphi_L \colon Y \to |\cL|$
is a double cover of $\bP^2$,
or
\item
the fixed component $s$ of $|\cLt|$
is a $(-2)$-curve,
and the free part is $2f$
where $f$ is a curve of self-intersection number $0$.
\end{itemize}
In the former case,
the branch locus must be a sextic.
In the latter case,
the morphism
$\varphi_f \colon \Yt \to |f| \cong \bP^1$
is an elliptic fibration
with a section $s$.
In both cases,
the surface can be modeled
by a complete intersection
of bidegree $(2,6)$
in $\bP(1,1,1,2,3)$
of the form
\begin{align} \label{eq:(2,6)-complete intersection}
 a_0 x_4 = g_2(x_1,x_2,x_3), \qquad
 x_5^2 = f_6(x_1,x_2,x_3,x_4),
\end{align}
where
$a_0$, $g_2$, and $f_6$ are homogeneous polynomials
of degrees 0, 2, and 6
(the linear term in $x_5$
is eliminated by a coordinate transformation).
If $a_0 \ne 0$,
then $Y$ is
the double cover of $\bP^2_{[x_1:x_2:x_3]}$
branched along the sextic curve
defined by $f_6(x_1,x_2,x_3,g_2(x_1,x_2,x_3)/a_0)$.
If $a_0 = 0$,
then the projection to $\bP^2_{[x_1:x_2:x_3]}$
gives a rational map
to the conic defined by $g_2 = 0$,
which becomes regular by blowing up the point $\ld 0:0:0:1:\sqrt{f_6(0,0,0,1)} \rd$.
The exceptional divisor of the blow-up is the section $s$.

\section{Double covers of $\bP^2$}

Let
\begin{align} \label{eq:B}
B \coloneqq
\lc
p_1 \coloneqq [0:0:1], \ 
p_2 \coloneqq [0:1:0], \ 
p_3 \coloneqq [1:0:0], \ 
p_4 \coloneqq [1:1:1] \rc
\end{align}
be a set of four point on $\bP^2$ in general position,
which is unique up to the action of $\Aut \bP^2 \cong \PGL_3(\bC)$.

\begin{lemma}
The space $S_B$ of homogeneous polynomials $f$ of degree 6 in $\bC[x_1,x_2,x_3]$,
such that the double cover of $\bP^2$
branched along the zero of $f$
has singularities
which are equal to or worse than rational double points of type $D_4$
above $B$,
is a vector space of dimension 4.
\end{lemma}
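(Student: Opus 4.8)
The plan is to first reduce the singularity hypothesis to a linear condition on $f$, and then to compute the resulting space of sextics. I would begin with the classical dictionary relating the singularities of a double cover $z^2 = f$ to those of its branch curve $\{f = 0\}$: over a point $p$ the cover has a singularity equal to or worse than a rational double point of type $D_4$ precisely when $\operatorname{mult}_p f \ge 3$, the cover being smooth or of type $A_k$ (hence strictly milder than $D_4$) when $\operatorname{mult}_p f \le 2$. Thus $S_B$ is exactly the space of sextics $f$ with $\operatorname{mult}_{p_i} f \ge 3$ for $i = 1, \dots, 4$; in particular it is a linear subspace, as claimed. Imposing $\operatorname{mult}_{p_i} f \ge 3$ is the vanishing of the $2$-jet of $f$ at $p_i$, i.e. $1 + 2 + 3 = 6$ linear conditions, so on the $\binom{8}{2} = 28$-dimensional space of ternary sextics these four points impose at most $24$ conditions and $\dim S_B \ge 4$.

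To pin down the dimension I would exploit the pencil of conics through $B$. Because the four points are in general position, the conics through them form a two-dimensional space; explicitly one may take it to be $\langle P, Q \rangle$ with $P = x_1(x_2 - x_3)$ and $Q = x_2(x_1 - x_3)$. Any conic through $B$ has multiplicity at least $1$ at each $p_i$, so a product of three of them has multiplicity at least $3$ there; hence the cubics in $P$ and $Q$, namely $P^3, P^2 Q, P Q^2, Q^3$, all lie in $S_B$. As $P$ and $Q$ are coprime these four sextics are linearly independent, producing a four-dimensional subspace $W \subseteq S_B$. It remains to prove $S_B = W$. For this I would blow up the four points, $\pi \colon X \to \bP^2$, with exceptional curves $E_1, \dots, E_4$ and $H = \pi^* \mathcal{O}_{\bP^2}(1)$; then $S_B = H^0(X, \mathcal{O}_X(6H - 3\sum_i E_i)) = H^0(X, \mathcal{O}_X(3L))$, where $L = 2H - \sum_i E_i$ is the class of the strict transforms of the conics. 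General position makes $|L|$ base point free with $L^2 = 0$, so it defines a conic bundle $\varphi \colon X \to \bP^1$ with $L = \varphi^* \mathcal{O}_{\bP^1}(1)$. By the projection formula $H^0(X, \mathcal{O}_X(3L)) = H^0(\bP^1, \mathcal{O}_{\bP^1}(3))$, which is four-dimensional, whence $\dim S_B = 4$ and $S_B = W$.

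The crux is the inequality $\dim S_B \le 4$, that is, the absence of unexpected sextics with triple points at the four points; this is precisely where general position is indispensable. It guarantees that the pencil of conics has base locus exactly $B$ and becomes base point free after a single blow-up, so that $L^2 = 0$ and $\varphi$ is an honest fibration whose fibres are reduced and connected, giving $\varphi_* \mathcal{O}_X = \mathcal{O}_{\bP^1}$. Equivalently, generality is what forces the $24$ multiplicity conditions to be independent. If one prefers to avoid the fibration, the same upper bound follows by direct linear algebra: imposing triple points at $p_1, p_2, p_3$ already annihilates the $18$ monomials in which some variable has degree at least $4$, leaving the ten degree-$6$ monomials with all exponents at most $3$, after which one checks that the six conditions coming from $p_4 = [1:1:1]$ have rank $6$ on this ten-dimensional space.
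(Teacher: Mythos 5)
Your proof is correct, and the key step --- the upper bound $\dim S_B \le 4$ --- is argued by a genuinely different route from the paper's. Both proofs start from the same reduction: the cover has a singularity equal to or worse than $D_4$ above $p$ exactly when the sextic has multiplicity at least $3$ at $p$; the paper encodes this as the vanishing of the Hesse matrix $\Hess(f)(p)$, which by Euler's relation is the same set of six linear equations per point. The paper then finishes by direct linear algebra: the conditions at $p_1,p_2,p_3$ leave exactly the ten monomials with all exponents at most $3$ (its equation \pref{eq:f}), and the six conditions at $p_4=[1:1:1]$ express the six mixed coefficients in terms of the four remaining ones --- this is precisely the coordinate argument you sketch at the end as an alternative. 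Your primary argument instead blows up the four points, identifies $S_B \cong H^0(X, \cO_X(3L))$ with $L = 2H - \sum_i E_i$, and uses that $|L|$ is a base-point-free pencil defining a conic bundle $\varphi\colon X \to \bP^1$ with $\varphi_*\cO_X = \cO_{\bP^1}$, whence $S_B \cong H^0(\bP^1,\cO_{\bP^1}(3))$. This buys more than the dimension count: it makes the role of general position transparent (it is exactly what gives $L^2=0$ and freeness of $|L|$), and it shows that every element of $S_B$ is a product of three conics through $B$, which is the content of the paper's \pref{lm:QB to SB}, proved there by a separate computation. One shared caveat: if the sextic has a non-reduced component through $p$ (multiplicity $2$, but an $A_\infty$, i.e.\ non-isolated, singularity upstairs), your phrase ``smooth or of type $A_k$'' is not literally accurate; such points are nevertheless not equal to or worse than $D_4$ in the adjacency sense, so the multiplicity (equivalently, Hessian) characterization of $S_B$, on which both proofs rest, is unaffected.
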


\begin{proof}
Let $Y$ be the double covers of $\bP^2_{[x_1:x_2:x_3]}$
branched along the sextic curve $C$ defined by
\begin{align}
f(x_1,x_2,x_3) = \sum_{i+j+k=6} a_{ijk} x_1^i x_2^j x_3^k.
\end{align}
Then $Y$ has a singularity
which is equal to or worse than a rational double point of type $D_4$
above a point $p \in \bP^2_{[x_1:x_2:x_3]}$
if and only if
$C$ has a singularity
which is equal to or worse than a curve singularity of type $D_4$ at $p$,
which is the case
if and only if the Hesse matrix
\begin{align}
\Hess(f) \coloneqq \lb \frac{\partial^2 f}{\partial x_i \partial x_j} \rb_{i,j=1}^3
\end{align}
vanishes at $p$.
Note that for each point $p \in \bP^2$,
the condition $\Hess(f)(p) = 0$
consists of six linear equations on $(a_{ijk})_{i+j+k=6} \in \bC^{28}$.
One can easily see that $\Hess(f)(p_1) = 0$
if and only if
$
a_{i,j,k} = 0 
$
for all $(i,j,k)$
satisfying $i+j \le 2$.
Repeating the same argument,
one shows that
$\Hess(f) (p_1) = \Hess(f) (p_2) = \Hess(f) (p_3) = 0$
if and only if
\begin{align} \label{eq:f}
  f &= a_{330} x^3 y^3 + a_{303} x^3 z^3 + a_{033} y^3 z^3
+ a_{222} x^2 y^2 z^2 \\
&\qquad + a_{321} x^3 y^2 z + a_{231} x^2 y^3 z
+ a_{312} x^3 y z^2 + a_{213} x^2 y z^3
+ a_{132} x y^3 z^2 + a_{123} x y^2 z^3. \nonumber
\end{align}
If we further impose $\Hess(f)(p_4)=0$,
then the coefficients $a_{ijk}$ appearing
on the second line of \pref{eq:f}
are determined
by those appearing on the first line.
\end{proof}

\begin{lemma} \label{lm:general position}
If the double cover of $\bP^2$
branched along the zero of a homogeneous polynomial $f$ of degree 6
has singularities
which are equal to or worse than rational double points of type $D_4$
above four points
three of which are colinear,
then it has a singularity
worse than rational double points.
\end{lemma}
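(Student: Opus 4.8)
The plan is to reduce everything to Bézout's theorem applied to the line through the three colinear points. First I would record the translation established in the proof of the previous lemma: the double cover of $\bP^2$ has a singularity equal to or worse than a rational double point of type $D_4$ above a point $p$ precisely when $\Hess(f)(p) = 0$. Since $f$ is homogeneous of degree $6$, Euler's relation shows that the vanishing of all six second partials at $p$ forces the vanishing of the first partials and of $f$ itself at $p$; indeed, differentiating $\sum_i x_i\,\partial_i f = 6f$ gives $\sum_i x_i\,\partial_i\partial_j f = 5\,\partial_j f$, so $\Hess(f)(p)=0$ yields $\partial_j f(p)=0$ and then $f(p)=0$. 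Thus the hypothesis says exactly that the sextic $C = \{f=0\}$ has multiplicity at least $3$ at each of the four points. Denote the three colinear points by $p_1,p_2,p_3$ and the line containing them by $\ell$.

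Next comes the first Bézout estimate. For a line through a point of multiplicity $m$ on a plane curve the local intersection multiplicity is at least $m$, so the intersection number satisfies $\ell \cdot C \geq \sum_{i=1}^3 \mathrm{mult}_{p_i}(C) \geq 9$. Since $\deg C = 6$, a proper intersection is impossible, and therefore $\ell$ must be a component of $C$. Writing $C = \ell \cup C'$ with $C'$ a quintic, additivity of multiplicity under union together with $\mathrm{mult}_{p_i}(\ell)=1$ shows $\mathrm{mult}_{p_i}(C') \geq 2$ for each $i$. A second Bézout estimate then gives $\ell \cdot C' \geq \sum_{i=1}^3 \mathrm{mult}_{p_i}(C') \geq 6 > 5 = \deg C'$, which again forces $\ell \subset C'$, since $\ell$ is irreducible. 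Hence $\ell$ appears in $C$ with multiplicity at least $2$, that is, $f = \ell^2 h$ for some quartic $h$.

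Finally I would translate the non-reduced branch divisor back to the double cover. In the model $x_5^2 = f$, the locus where $x_5 = 0$ and $\ell = 0$ is contained in the singular locus, because there both $x_5$ and the gradient of $\ell^2 h = f$ vanish; this is a one-dimensional singular locus, so the double cover is non-normal and in particular has a singularity worse than any rational double point, which are isolated. This completes the argument.

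I expect the only genuine subtlety to be bookkeeping: making sure that the Hessian condition really delivers multiplicity $\geq 3$ (so that each colinear point contributes $\geq 3$, and then $\geq 2$, to the relevant intersection number), and that a doubled line in the branch locus indeed produces a non-isolated, hence non-rational-double-point, singularity rather than something that can be resolved crepantly. Both points are routine once stated, so the heart of the matter is simply the two successive applications of Bézout's theorem, each of which over-saturates the degree of $C$ and thereby forces $\ell$ deeper into the branch curve.
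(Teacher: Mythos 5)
Your proof is correct, and it takes a genuinely different route from the paper's. The paper's own proof is a direct computation in normalized coordinates: it places the four points at $[0:0:1]$, $[0:1:0]$, $[1:0:0]$, $[1:1:0]$ (so that the colinear triple lies on the line $z=0$) and checks that imposing $\Hess(f)(1,1,0)=0$ on the normal form \pref{eq:f} forces $a_{330}=a_{321}=a_{231}=0$, i.e.\ kills precisely the monomials not divisible by $z^2$; hence $z^2 \mid f$, and the conclusion is drawn from the non-reducedness of the branch curve exactly as in your final step. You reach the same divisibility statement $\ell^2 \mid f$ synthetically: Euler's relation converts the Hessian condition into multiplicity $\geq 3$ of the sextic at each point, and two applications of B\'ezout's theorem ($9>6$ forces $\ell \subset C$, then $6>5$ forces $\ell$ into the residual quintic) push the line into the branch sextic with multiplicity two. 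What the paper's computation buys is brevity, since \pref{eq:f} was already in hand from the preceding lemma; what your argument buys is coordinate-independence and slightly greater generality: it uses only the three colinear points (the fourth is irrelevant), it applies verbatim in the degenerate case where all four points are colinear — a configuration the paper's normalization tacitly excludes — and it makes the numerical mechanism ($3\cdot 3>6$, then $3\cdot 2>5$) transparent. All steps of your argument are sound: the Euler-relation upgrade from vanishing Hessian to multiplicity $\geq 3$, the two B\'ezout estimates, and the closing observation that a doubled line in the branch divisor yields a one-dimensional singular locus on the double cover, whereas rational double points are isolated.
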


\begin{proof}
A direct calculation shows that
if one imposes
$\Hess(f)(1,1,0) = 0$
on \pref{eq:f},
then one obtains $a_{321}=a_{231}=0$
(and $a_{330} = a_{222}+a_{312}+a_{132} = 0$),
so that $f$ is divisible by $z^2$.
It follows that the singularity is not isolated,
and hence not a rational double point in particular.
\end{proof}

Let
\begin{align}
Q_B
\coloneqq \lc a_1 x_2 x_3 + a_2 x_1 x_3 + a_3 x_1 x_2
\in \bC[x_1,x_2,x_3] \relmid a_1 + a_2 + a_3 = 0 \rc
\end{align}
be the space of homogeneous polynomials of degree 2
vanishing at $B$.
The projective line $\bP(Q_B)$ can be identified
with the pencil $|2H-B|$ of conics
passing through $B$.
The pencil contains three singular members
\begin{align} \label{eq:D}
D_1 \coloneqq 
\overline{p_1 p_2} \cup \overline{p_3 p_4}, \quad
D_2 \coloneqq 
\overline{p_1 p_3} \cup \overline{p_2 p_4}, \quad
D_3 \coloneqq 
\overline{p_1 p_4} \cup \overline{p_2 p_3},
\end{align}
each of which
% \begin{align} \label{eq:d}
% d_1 \coloneqq 
% d_{(12)(34)}, \quad
% d_2 \coloneqq 
% d_{(13)(24)}, \quad
% d_3 \coloneqq 
% d_{(14)(23)}
% \end{align}
% where $d_{(ij)(kl)} = \overline{p_i p_j} \cup \overline{p_k p_l}$
is the union of two lines.

\begin{lemma} \label{lm:QB to SB}
The map
\begin{align} \label{eq:QB to SB}
(Q_B)^3 \to \bC[x_1,x_2,x_3],
\qquad
(q_1,q_2,q_3) \mapsto q_1 q_2 q_3
\end{align}
induces a surjection
$
\bP(Q_B)^3 \to \bP(S_B),
$
which can be identified with
the natural projection
$
(\bP^1)^3 \to (\bP^1)^3/\fS_3
\cong \bP^3
$
to the symmetric product.
\end{lemma}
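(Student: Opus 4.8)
The plan is to reinterpret the multiplication map as a linear isomorphism between two four-dimensional spaces and then to appeal to the classical description of the symmetric power of $\bP^1$. Since $Q_B$ is cut out inside the three-dimensional space of conics through $B$ by the single linear condition $a_1 + a_2 + a_3 = 0$, it is two-dimensional, so $\bP(Q_B) \cong \bP^1$; fix a basis $q, q'$ of $Q_B$. The assignment $(q_1, q_2, q_3) \mapsto q_1 q_2 q_3$ is homogeneous of degree one in each argument and symmetric, so it descends to a well-defined morphism on projective spaces that factors through the symmetric quotient $\bP(Q_B)^3 \to \bP(Q_B)^3 / \fS_3$. Concretely it is induced by the linear multiplication map $\mu \colon \operatorname{Sym}^3 Q_B \to \bC[x_1,x_2,x_3]_6$. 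As $\dim \operatorname{Sym}^3 Q_B = 4$ and, by the first lemma of the present section, $\dim S_B = 4$, the whole statement reduces to showing that $\mu$ is an isomorphism onto $S_B$.

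First I would check the containment $\mu(\operatorname{Sym}^3 Q_B) \subseteq S_B$. Every factor $q_i \in Q_B$ vanishes at each $p_j \in B$, so the product $q_1 q_2 q_3$ vanishes to order at least $3$ at $p_j$; consequently its $2$-jet, and in particular every entry of $\Hess(q_1 q_2 q_3)$, vanishes at $p_j$ for $j = 1, \dots, 4$. By the Hessian characterization established in the proof of the first lemma, the double cover branched along $\{q_1 q_2 q_3 = 0\}$ then has a singularity equal to or worse than a rational double point of type $D_4$ over each $p_j$, whence $q_1 q_2 q_3 \in S_B$.

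Next I would prove that $\mu$ is injective. Writing $t \coloneqq q/q'$, which is a non-constant rational function because $q$ and $q'$ are linearly independent, the four products ${q'}^3, {q'}^2 q, q' q^2, q^3$ equal ${q'}^3$ times $1, t, t^2, t^3$ and are therefore linearly independent over $\bC$. Hence $\mu$ is injective, and the equality of dimensions forces $\mu$ to be an isomorphism onto $S_B$. In particular every $f \in S_B$ is a homogeneous cubic expression in $q$ and $q'$, which over $\bC$ factors into three linear forms in $q, q'$, i.e. $f = q_1 q_2 q_3$ with each $q_i \in Q_B$; this makes the surjectivity of $\bP(Q_B)^3 \to \bP(S_B)$ manifest.

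Finally I would invoke the classical identification $\bP(\operatorname{Sym}^3 Q_B) \cong \operatorname{Sym}^3 \bP(Q_B) = (\bP^1)^3 / \fS_3 \cong \bP^3$, under which an unordered triple $\{[s_1], [s_2], [s_3]\}$ corresponds to $[s_1 s_2 s_3]$. Projectivizing the isomorphism $\mu$ identifies $\bP(S_B)$ with $(\bP^1)^3 / \fS_3 \cong \bP^3$ and turns the original map into the quotient projection $(\bP^1)^3 \to (\bP^1)^3 / \fS_3$, as claimed. I expect the only step requiring genuine care to be the containment in $S_B$: one must be sure that three conics through a common point of $B$ really force the full Hessian to vanish there, and not merely some weaker degeneration — but the order-of-vanishing argument settles this cleanly. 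Everything else is linear algebra together with the standard fact that $\operatorname{Sym}^3 \bP^1 \cong \bP^3$.
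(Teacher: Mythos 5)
Your proof is correct, and it reaches the conclusion by a more conceptual route than the paper, whose entire proof is a one-line coordinate computation: the paper chooses $(a_{033},a_{303},a_{330},a_{222})$ as coordinates on $S_B$, writes $q_i=(a_{i1},a_{i2},a_{i3})$, and simply records that the map sends $(q_1,q_2,q_3)$ to $\lb a_{11}a_{21}a_{31},\, a_{12}a_{22}a_{32},\, a_{13}a_{23}a_{33},\, \sum_{\sigma\in\fS_3}a_{1\sigma(1)}a_{2\sigma(2)}a_{3\sigma(3)}\rb$, leaving implicit both that the product lands in $S_B$ and that these four multi-symmetric functions realize the standard quotient $(\bP^1)^3\to(\bP^1)^3/\fS_3\cong\bP^3$ (equivalently, that the underlying linear map $\operatorname{Sym}^3 Q_B\to S_B$ is bijective). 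You make exactly these implicit points into proved statements: membership in $S_B$ via order of vanishing (each $q_i$ vanishes at $p_j$, so $q_1q_2q_3$ vanishes to order $\ge 3$ and hence $\Hess(q_1q_2q_3)(p_j)=0$, which is the right argument and also covers degenerate triples, since $S_B$ only asks for singularities equal to or worse than $D_4$), and bijectivity via injectivity of $\mu$ (linear independence of $1,t,t^2,t^3$ for the non-constant rational function $t=q/q'$) combined with the dimension count $\dim\operatorname{Sym}^3Q_B=4=\dim S_B$ from the preceding lemma. What the paper's computation buys is an explicit formula for the map, in the same spirit as the explicit coordinates $[u_1:u_2:u_3:u_4]$ used later in \pref{sc:P3}; what your argument buys is completeness and transparency --- it proves the identification with the symmetric product rather than displaying formulas from which the reader is expected to infer it. The two are logically equivalent, since verifying that the paper's displayed coordinate functions are linearly independent amounts precisely to your injectivity-plus-dimension argument.
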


\begin{proof}
If we choose $(a_{033},a_{303},a_{330},a_{222})$ as a coordinate of $S_B$,
$(a_1,a_2,a_3)$ with $a_1+a_2+a_3=0$ as a coordinate of $Q_B$
and write $q_i \in Q_B$ as $(a_{i1},a_{i2},a_{i3})$ for $i=1,2,3$,
then
the map \pref{eq:QB to SB} sends $(q_i)_{i=1}^3$ to
$
\lb
a_{11} a_{21} a_{31},
a_{12} a_{22} a_{32},
a_{13} a_{23} a_{33},
\sum_{\sigma \in \fS_3} a_{1\sigma(i)} a_{2 \sigma(2)} a_{3 \sigma(3)}
\rb.
$
\end{proof}

\begin{remark}
For $i \in \{1,2,3\}$,
let $Q_i$
be the conic defined by $q_i$.
The cremona transformation
$[x_1:x_2:x_3] \mapsto [1/x_1:1/x_2:1/x_3]$
of $\bP^2$ sends the union $C = Q_1 \cup Q_2 \cup Q_3$
of three conics
to the union of six lines defined by
\begin{align}
x_1 x_2 x_3 \prod_{i=1}^3 \lb a_{i1} x_1 + a_{i2} x_2 + a_{i3} x_3 \rb,
\end{align}
which are not in general position
but three lines intersects at $[1:1:1]$.
The double cover of $\bP^2$ branched along it
has twelve nodes and one $D_4$-singularity.
\end{remark}

\section{Lattices and orthogonal groups} \label{sc:lattices}

We use the following notations in this paper:
\begin{itemize}
\item
For a pair $(m,n)$ of natural numbers,
the lattice $\bfI_{m,n}$ is the free abelian group of rank $m+n$
equipped with the inner product
\begin{align}
((a_i)_i,(b_i)_i) = \sum_{i=1}^m a_i b_i - \sum_{i=m+1}^{n+m} a_i b_i.
\end{align}
\item
For a pair $(m,n)$ of positive integers satisfying $m \equiv n \mod 8$,
the lattice $\bfII_{m,n}$ is the unique even unimodular indefinite lattice
of signature $(m,n)$.
\item
The root lattice of type $X_k$ is denoted
by $\bfX_k$.
\item
For an integer $k$,
the lattice $\la k \ra$ is generated
by a single element with self-intersection $k$.
\item
The lattice $\bfU$ is the even unimodular hyperbolic plane $\bfII_{1,1}$.
\item
Given a lattice $L$,
the lattice $L(k)$ has the same underlying free abelian group as $L$
and the inner product which is $k$ times that of $L$.
\item
The K3 lattice
$
\LambdaK
\cong \bfII_{3,19}
\cong \bfE_8^{\bot 2} \bot \bfU^{\bot 3}
$
is the second integral cohomology group of a K3 surface
equipped with the intersection form.
\item
Given a lattice $L$,
the set of $(-2)$-elements is denoted by
$
\Delta(L) \coloneqq \lc \delta \in L \relmid (\delta, \delta) = -2 \rc.
$
\end{itemize}

The \emph{discriminant group}
of a non-degenerate lattice $L$
is the quotient
$
D_L
$
of the dual free abelian group
$
L^\dual \coloneqq \Hom(L, \bZ)
$
by the image of the natural map
$
L \hookrightarrow L^\dual,
$
$
l \mapsto (l, -).
$
The $\ell$-invariant $\ell_L$
is defined as the minimal number of generators
of the finite abelian group $D_L$.
A non-degenerate lattice $L$ is said to be \emph{2-elementary}
if
$
D_L \cong (\bZ/2\bZ)^{\ell_L}
$.
If the lattice $L$ is even,
then the \emph{discriminant form} $q_L$
is defined as
the quadratic form
on the discriminant group
sending
$
x \in D_L
% = L^\dual/L
$
to
$
[(x,x)] \in \bQ/2\bZ.
$
The \emph{parity invariant} of $L$ is defined by
\begin{align}
\delta_L
\coloneqq
\begin{cases}
0 & \text{$q_L$ takes values in $\bZ/2\bZ \subset \bQ/2\bZ$}, \\
1 & \text{otherwise}.
\end{cases}
\end{align}
By Nikulin \cite{MR525944}
(cf.~also \cite[Theorem 1.5.2]{MR728992}),
the isomorphism class of an even indefinite 2-elementary lattice
is determined uniquely
by
the parity invariant,
the $\ell$-invariant,
and
the signature.

The \emph{stable orthogonal group}
$
\rOt(L)
$
is
defined as
the kernel of the natural projection
from
$
\rO(L)
$
to the finite orthogonal group
$
\rO(D_L, q_L).
$

Let $P$ be
the Picard lattice of the minimal resolution
of a very general Gorenstein K3 surface of degree 2
with four rational double points of type $D_4$,
and $Q$ be the 
orthogonal complement of $P$
in the K3 lattice $\LambdaK$.

\begin{lemma} \label{lm:P}
The lattice $P$ is an extension of
$
\lb \bfD_4 \rb^{\bot 4} \bot \la 2 \ra
$
by $(\bZ/2\bZ)^2$.
\end{lemma}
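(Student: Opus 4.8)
The plan is to exhibit $P$ as an explicit even overlattice of $M \coloneqq \lb \bfD_4 \rb^{\bot 4} \bot \la 2 \ra$ whose gluing group is isomorphic to $(\bZ/2\bZ)^2$. By \pref{lm:QB to SB} a very general member $(Y,\cL)$ is the double cover of $\bP^2$ branched along a sextic $C = Q_1 \cup Q_2 \cup Q_3$ that is a union of three conics through the four points $B$, so that $C$ has an ordinary triple point, and $Y$ a rational double point of type $D_4$, over each $p_j \in B$. On the minimal resolution $\mu \colon \Yt \to Y$ the sixteen exceptional $(-2)$-curves span $\lb \bfD_4 \rb^{\bot 4}$, and, writing $f \colon \Yt \to \bP^2$ for the composite of $\mu$ with the double cover, the class $L \coloneqq \cLt = f^* \mathcal{O}_{\bP^2}(1)$ satisfies $L^2 = 2$ and is orthogonal to every exceptional curve (these are contracted by $f$). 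This produces the sublattice $M \subseteq P$. Since a very general member has Picard number $17 = \operatorname{rank} M$ (the family is three-dimensional by \pref{lm:QB to SB}, and $20 - 17 = 3$), the inclusion $M \subseteq P$ has finite index, so $P$ is an even overlattice of $M$.

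The extra classes arise from the ramification over the three conics. For $i \in \{1,2,3\}$ let $R_i \subset Y$ be the reduced preimage of $Q_i$ and $\widetilde{R}_i \subset \Yt$ its proper transform; since $R_i \to Q_i \cong \bP^1$ is an isomorphism, $\widetilde{R}_i$ is a smooth rational curve, hence a $(-2)$-curve. In the $D_4$ configuration over $p_j$, with central curve $c_j$ and legs $\ell_{1,j}, \ell_{2,j}, \ell_{3,j}$ labelled so that $\widetilde{R}_i$ meets $\ell_{i,j}$ transversally and is disjoint from the remaining three curves, I would determine the class of $\widetilde{R}_i$ from $f^* Q_i = 2 L$ together with the fact that $f^* Q_i$ is orthogonal to every exceptional curve. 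Solving this orthogonality condition leg by leg (the local model being the double cover $x^2 = uvw$ of three lines through a point) yields
\begin{align}
2 L = f^* Q_i = 2 \widetilde{R}_i + \sum_{j=1}^4 \lb 2 c_j + 2 \ell_{i,j} + \sum_{k \ne i} \ell_{k,j} \rb,
\end{align}
which is consistent with $L \cdot \widetilde{R}_i = 2$ and $\widetilde{R}_i^2 = 2 - 4 = -2$. Reducing modulo $M$, the class $\widetilde{R}_i$ gives the glue vector $g_i \coloneqq \tfrac12 \sum_{j=1}^4 \sum_{k \ne i} \ell_{k,j} \in D_M$.

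In each factor $D_{\bfD_4} \cong (\bZ/2\bZ)^2$, the element $\tfrac12(\ell_{k,j} + \ell_{k',j})$ attached to a pair of distinct legs is one of the three nonzero classes, and these three classes sum to zero; hence $g_1, g_2, g_3$ are nonzero and satisfy $g_1 + g_2 + g_3 = 0$, so $H \coloneqq \langle g_1, g_2 \rangle \cong (\bZ/2\bZ)^2$. A short computation gives $q_M(g_i) = \sum_{j=1}^4 q_{\bfD_4}\!\lb \tfrac12\textstyle\sum_{k\ne i}\ell_{k,j} \rb \equiv 4 \equiv 0 \bmod 2\bZ$ and $b_M(g_1, g_2) = \sum_{j=1}^4 \tfrac12 \equiv 0 \bmod \bZ$, so $H$ is isotropic and defines an even overlattice $M_H$ with $M \subseteq M_H \subseteq P$, $[M_H : M] = 4$, and $M_H / M \cong (\bZ/2\bZ)^2$.

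Finally I would upgrade the inclusion $M_H \subseteq P$ to an equality: $M_H$ has rank $17$, the moduli space of $M_H$-polarized K3 surfaces is three-dimensional, and its very general member has Picard lattice exactly $M_H$; since our three-dimensional family factors through this moduli space, the very general member satisfies $P = M_H$, which is the asserted extension. The hard part is twofold. First, one must carry out the local computation of $[\widetilde{R}_i]$ correctly, justifying the coefficients of the exceptional curves in the displayed identity through the explicit resolution of the $D_4$ double point. Second, and more conceptually, one must rule out that $P$ is a still larger overlattice of $M$; I expect this to be the main obstacle, and I would handle it by the dimension count above, equivalently by checking that the discriminant form $q_{M_H}$ matches that of the transcendental lattice $Q$, which forces $P = M_H$.
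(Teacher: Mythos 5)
Your construction is the paper's, almost line by line: the very general member is realized as the double cover of $\bP^2$ branched along three conics through the four points of $B$; the pullback of the hyperplane class together with the sixteen exceptional curves spans the sublattice $P_0 \cong \la 2 \ra \bot \bfD_4^{\bot 4}$; and the extra classes are halves of the strict transforms of the three conics. Your displayed identity for $f^* Q_i$ coincides exactly with the paper's linear equivalence $2 \Qbar_i \sim 2\ell - \sum_{j=1}^4 \lb E_{ji} + E_{j1} + E_{j2} + E_{j3} + 2 E_{j4} \rb$, and your computations of the glue vectors, of the relation $g_1 + g_2 + g_3 = 0$, and of the isotropy of $H \cong (\bZ/2\bZ)^2$ are all correct; the paper states the linear equivalence and simply asserts the rest, so on these points you are more explicit than the paper itself.

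The place where you go beyond the paper is the final step, ruling out $P \supsetneq M_H$, and there both of your proposed arguments would fail. A finite-index even overlattice $N \supsetneq M_H$ has the same rank $17$, so $N$-polarized K3 surfaces also form a three-dimensional family; the dimension count bounds only the rank of $P$, not the index $[P : M_H]$, and therefore cannot distinguish $P = M_H$ from $P = N$. The discriminant-form comparison is circular: in this paper $Q$ is \emph{defined} as the orthogonal complement of $P$ in $\LambdaK$, and its discriminant form is computed \emph{from} $P$ in \pref{lm:Q}, so $q_Q$ is not available as independent input to pin down $P$. Closing this step would require either classifying the isotropic subgroups of $q_{M_H}$ and excluding the strict overlattices geometrically (for instance because their extra half-integral classes would force impossible curves on $\Yt$), or showing directly that the embedding of $M_H$ into $\LambdaK$ determined by the geometry is primitive. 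To be fair, the paper's own proof asserts $P/P_0 \cong (\bZ/2\bZ)^2$ without addressing maximality at all, so your writeup matches the paper on every step the paper actually carries out; just do not present the dimension count or the $q_Q$ comparison as if they settle the remaining one.
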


\begin{proof}%[Proof of \pref{lm:P}]
A very general Gorenstein K3 surface of degree 2
with four rational double point of type $D_4$
is a double cover $\varphi \colon Y \to \bP^2$
branched along the union $Q_1 \cup Q_2 \cup Q_3$
of three smooth conics
passing through four points
$
B = \{ p_i \}_{i=1}^4 \subset \bP^2
$
in general position.
Let
$\{ E_{ij} \}_{i,j=1}^4$
be the set of irreducible components
of the exceptional divisor
of the minimal resolution
$
\mu \colon \Yt \to Y
$
such that $\mu(E_{ij}) = p_i$
and
\begin{align}
E_{ij} \cdot E_{ik} =
\begin{cases}
-2 & j = k, \\
1 & (j,k) \in \{ (1,4), (2,4), (3,4) \}, \\
0 & \text{otherwise}
\end{cases}
\end{align}
for any $i \in \{ 1, \ldots, 4 \}$.
The total transform $\ell$
of the hyperplane $H$ in $\bP^2$
and the set $\{ E_{ij} \}_{i,j=1}^4$ of divisors 
generates a sublattice $P_0$ of $P \coloneqq \Pic \Yt$
isometric to $\la 2 \ra \bot \bfD_4^{\bot 4}$.

The strict transform of the conic $Q_i$,
which is linearly equivalent to
\begin{align}
2 \ell - \sum_{j=1}^4 \lb E_{ji} + E_{j1} + E_{j2} + E_{j3} + 2 E_{j4} \rb,
\end{align}
is twice a $(-2)$-curve $\Qbar_i$.
The group $P/P_0$
is isomorphic to $\lb \bZ / 2 \bZ \rb^2$,
and $\lc \ld \Qbar_i \rd \rc_{i=1,2,3} \subset P/P_0$
is the set of non-zero elements.
\end{proof}

\begin{lemma} \label{lm:Q}
The lattice $Q$ is isometric to
$
\bfI_{2,3}(2)
$
\end{lemma}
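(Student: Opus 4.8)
The plan is to reduce the statement to the classification of even indefinite 2-elementary lattices quoted above from Nikulin \cite{MR525944}, by computing the signature, the $\ell$-invariant, and the parity invariant of $Q$ and comparing them with those of $\bfI_{2,3}(2)$. A finite-index overlattice has the same signature as the lattice it contains, so by \pref{lm:P} the lattice $P$ has the signature $(1,16)$ of $P_0 \coloneqq \la 2 \ra \bot \bfD_4^{\bot 4}$; as $Q$ is the orthogonal complement in $\LambdaK \cong \bfII_{3,19}$ of the primitive sublattice $P = \Pic \Yt$, it has signature $(2,3)$ and is even. On the other side, $\bfI_{2,3}(2)$ is even of signature $(2,3)$, its discriminant group $\frac12 \bfI_{2,3} / \bfI_{2,3} \cong (\bZ/2\bZ)^5$ is 2-elementary with $\ell = 5$, and the discriminant form takes the values $\pm 1/2 \in \bQ/2\bZ$, so $\delta = 1$. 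Thus it suffices to show that $Q$ is 2-elementary with $\ell_Q = 5$ and $\delta_Q = 1$.

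Because $P$ is a primitive sublattice of the unimodular lattice $\LambdaK$, one has $q_Q \cong - q_P$, and in particular $D_Q \cong D_P$ and $\delta_Q = \delta_P$ \cite{MR728992}; so it is enough to compute the discriminant of $P$. By \pref{lm:P}, $P$ is the overlattice of $P_0$ associated with the isotropic subgroup $H \coloneqq P/P_0 \cong (\bZ/2\bZ)^2 \subset D_{P_0}$, whence $D_P \cong H^\perp/H$. The form $q_{P_0}$ decomposes as $q_{\bfD_4}^{\oplus 4} \oplus q_{\la 2 \ra}$, where each $q_{\bfD_4}$ is the even nondegenerate form on $(\bZ/2\bZ)^2$ and $q_{\la 2 \ra}$ is the rank-one form on $\bZ/2\bZ$ with value $1/2$. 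Reducing the classes $\ld \Qbar_i \rd$ modulo $P_0$ by means of the explicit linear equivalence in the proof of \pref{lm:P}, I find that each of them is a sum of exactly one nonzero element from each of the four copies of $D_{\bfD_4}$; hence $H$ lies in the $\bfD_4^{\bot 4}$-summand and the odd summand $q_{\la 2 \ra}$ is left untouched.

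Since $D_{P_0} \cong (\bZ/2\bZ)^9$ is 2-elementary, so is its subquotient $H^\perp/H \cong D_P$; counting orders gives $|D_P| = |D_{P_0}|/|H|^2 = 2^9/2^4 = 2^5$, so $\ell_P = 5$. The surviving summand $q_{\la 2 \ra}$ takes a value outside $\bZ/2\bZ$, so $\delta_P = 1$ (indeed an odd-rank 2-elementary form can never be even). Transporting these along $q_Q \cong - q_P$ yields $\ell_Q = 5$ and $\delta_Q = 1$, and Nikulin's uniqueness theorem \cite{MR525944} identifies $Q$ with $\bfI_{2,3}(2)$. The one point requiring genuine care is the second paragraph: one must check that $H$ is isotropic, so that \pref{lm:P} indeed produces an even overlattice, and that $H$ meets the $\la 2 \ra$-part of $D_{P_0}$ trivially, since it is precisely the survival of the odd summand $q_{\la 2 \ra}$ that pins down $\delta_Q = 1$ and hence the isomorphism type.
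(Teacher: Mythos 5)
Your proposal is correct and takes essentially the same route as the paper: both arguments reduce the claim to Nikulin's uniqueness theorem for even indefinite 2-elementary lattices by computing the signature, the $\ell$-invariant, and the parity invariant of $Q$, transferred from $P$ via complementarity in the unimodular K3 lattice. The only difference is one of detail: the paper asserts $\ell_P = 5$ and $\delta_P = 1$ directly from the invariants $(\ell, \delta) = (9,1)$ of $\lb \bfD_4 \rb^{\bot 4} \bot \la 2 \ra$, whereas you justify these by computing the glue group $H = P/P_0$ explicitly from the classes $\ld \Qbar_i \rd$ and using $D_P \cong H^\bot / H$, which is a sound (and welcome) elaboration of the same argument.
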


\begin{proof}
Since
$
\lb \bfD_4 \rb^{\bot 4} \bot \la 2 \ra
$
is 2-elementary
with the $\ell$-invariant 9
and the parity invariant 1,
the lattice $P$ is 2-elementary
with $\ell_P=5$
and $\delta_P=1$.
Hence the lattice $Q$ is characterized
as the unique even 2-elementary lattice
with $\delta_Q = 1$,
$\ell_Q = 5$,
and signature $(2,3)$.
\end{proof}

\begin{remark}
It follows
from \pref{lm:Q}
that
$Q^\dual$ is identified with $\frac{1}{2} Q \subset Q \otimes_\bZ \bQ$,
so that
$
D_Q
\coloneqq Q^\dual/Q
\cong \frac{1}{2} Q / Q
\cong Q / 2 Q
$
and
$
\rOt(Q) = \lc g \in \rO(Q) \relmid g \equiv \id \mod 2 \rc.
$
\end{remark}

\section{Automorphic forms and modular varieties for orthogonal groups}

Let
$
\cDt
$
be a connected component of
$
\lc
\Omega \in Q \otimes \bC
\relmid
(\Omega, \Omega) = 0, \ 
(\Omega, \Omegabar) > 0
\rc
$
and $\cD$ be its image in $\bP(Q \otimes \bC)$,
which is a symmetric domain of type IV.
The subgroup of the orthogonal group $\rO(Q)$ of index two
preserving the connected component $\cDt$
will be denoted by $\rO^+(Q)$.
Given a subgroup $\Gamma$ of $\rO^+(Q)$,
an \emph{automorphic form} of weight $k \in \bZ$
is a holomorphic function
$
 f \colon \cDt \to \bC
$
satisfying
\begin{enumerate}[(i)]
 \item
$f(\alpha z) = \alpha^{-k} f(z)$
for any $\alpha \in \bCx$, and
 \item
$f(\gamma z) = f(z)$
for any
$
\gamma \in \Gamma
$.
\end{enumerate}
The space
$A_k(\Gamma)$
of automorphic forms of weight $k$
constitute the graded ring
$
A(\Gamma)
\coloneqq
\bigoplus_{k=0}^\infty
A_k(\Gamma),
$
whose projective spectrum
$
\cMbar(\Gamma)
\coloneqq
\Proj A(\Gamma)
$
is the Satake--Baily--Borel compactification
of the \emph{modular variety}
$
\cM(\Gamma) \coloneqq \cD / \Gamma.
$

We can choose a subset $\Delta(P)^+$ of
$
 \Delta(P)
$
satisfying
\begin{enumerate}
 \item
$\Delta(P) = \Delta(P)^+ \coprod (- \Delta(P)^+)$ and
 \item
if $\delta_1,\delta_2 \in \Delta(P)^+$ and $\delta_1+\delta_2 \in \Delta(P)$,
 then $\delta_1+\delta_2 \in \Delta(P)^+$.
%$\Delta(P)^+$ is closed under addition (but not subtraction).
\end{enumerate}
The choice of $\Delta(P)^+$ is unique
up to the action of $O(P)$.
Define
\begin{align}
 C(P) &\coloneqq \lc h \in P \relmid
  (h, \delta) \ge 0 \text{ for any } \delta \in \Delta(P)^+ \rc, \\
 C(P)^\circ &\coloneqq \lc h \in P \relmid
  (h, \delta) > 0 \text{ for any } \delta \in \Delta(P)^+ \rc.
\end{align}

For a projective variety $Y$, we set
\begin{align}
 \Pic(Y)^{+} &\coloneqq C(Y) \cap H^2(Y; \bZ), \\
 \Pic(Y)^{++} &\coloneqq C(Y)^\circ \cap H^2(Y; \bZ),
\end{align}
where $C(Y)^\circ \subset H^{1,1}(Y) \cap H^2(Y; \bR)$
is the K\"{a}hler cone of $Y$
and $C(Y)$ is its closure.

\begin{definition}[{Nikulin \cite{MR544937}}] \label{df:Nikulin}
%Let $P$ be an even non-degenerate lattice
%of signature $(1, t)$
%where $0 \le t \le 19$.
A {\em $P$-polarized K3 surface} is a pair $(Y, j)$
where $Y$ is a K3 surface and
$
 j \colon P \hookrightarrow \Pic(Y)
$
is a primitive lattice embedding.
An {\em isomorphism} of $P$-polarized K3 surfaces
$(Y, j)$ and $(Y', j')$ is an isomorphism
$f : Y \to Y'$ of K3 surfaces
such that $j = f^* \circ j'$.
%making the diagram
%\begin{align}
%\begin{psmatrix}[colsep=2cm]
%  & P & \\
% \Pic Y' & & \Pic Y
%\end{psmatrix}
%\psset{hookwidth=3mm,nodesep=2mm,shortput=nab,arrows=->}
%\ncline[arrows=H->,hookwidth=3mm]{1,2}{2,1}_{j'}
%\ncline[arrows=H->,hookwidth=-3mm]{1,2}{2,3}^{j}
%\ncline{2,1}{2,3}^{f^*}
% \label{eq:diagram1}
%\end{align}
%commute.
A $P$-polarized K3 surface is
\emph{pseudo-ample} if
$
 j(C(P)^\circ) \cap \Pic(Y)^+ \ne \emptyset,
$
and
\emph{ample} if
$
 j(C(P)^\circ) \cap \Pic(Y)^{++} \ne \emptyset.
$
\end{definition}

As explained in \cite[Remark 3.4]{MR1420220},
% the global Torelli theorem \cite{MR0284440, MR0447635}
% and the surjectivity of the period map \cite{MR592693}
% show that
the coarse moduli space $\cM$ of
pseudo-ample $P$-polarized K3 surfaces
is isomorphic to $\cM \lb \rOt^+(Q) \rb$
where $\rOt^+(Q) \coloneqq \rO^+(Q) \cap \rOt(Q)$.
The coarse moduli space of ample $P$-polarized K3 surfaces
is the complement
$
\cM \setminus \cH
$
of the union
$
\cH =
\bigcup_{\delta \in \Delta^+(Q)} \cH_\delta
$
% of
% $
% \cM \lb \rOt(Q) \rb,
% $
of the divisors
% $
% \cH_{ij}
% \coloneqq {v_{ij}}^\bot \left/ \rOt(Q) \right.
% $
% and
$
\cH_{\delta}
\coloneqq \left. \rOt^+(Q) \, {\delta}^\bot \middle/ \, \rOt^+(Q) \right.
$
where
$
{\delta}^\bot
\coloneqq \{ [\Omega] \in \cD \mid (\Omega, \delta) = 0 \}
$
is the reflection hyperplane.

The main result of this paper is the following:

\begin{theorem} \label{th:main}
The variety $\cMbar \lb \rOt^+(Q) \rb$ is isomorphic to $\bP^3$.
The graded ring $A \lb \rOt^+(Q) \rb$ is generated by four elements of weight 2
and one element of weight 11
with one relation of weight 22.
\end{theorem}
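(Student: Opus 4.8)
\section{Proof proposal}

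The plan is to realize $\cMbar(\rOt^+(Q))$ as the parameter space $\bP(S_B)$ through periods, and then to read the graded ring off the geometry of the discriminant. By the lattice--polarized Torelli theorem of Dolgachev \cite{MR1420220} together with \pref{df:Nikulin}, the variety $\cM(\rOt^+(Q))$ is the coarse moduli space of pseudo-ample $P$-polarized K3 surfaces, and its ample locus $\cM \setminus \cH$ parametrizes the double covers of $\bP^2$ branched along a union $Q_1 \cup Q_2 \cup Q_3$ of three smooth conics through $B$. Sending such a surface to its branch sextic $q_1 q_2 q_3 \in S_B$ and invoking \pref{lm:QB to SB}, I would build a period map from $\bP(S_B) \cong (\bP^1)^3/\fS_3 \cong \bP^3$ to $\cM$ whose restriction to the complement of the discriminant is, by Torelli, an open immersion onto the ample locus $\cM \setminus \cH$.

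Since $\bP(S_B) = \bP^3$ is already compact, upgrading this to an isomorphism $\bP^3 \xrightarrow{\sim} \cMbar(\rOt^+(Q))$ amounts to extending the open immersion to a morphism and checking it is bijective on the complement of the ample locus. Over the discriminant the surface acquires extra $(-2)$-curves (when two conics coincide, or when a conic degenerates to one of the line-pairs $D_1, D_2, D_3$ of \pref{eq:D}), so these loci map into the Heegner divisor $\cH$; by Borel's extension theorem the map extends holomorphically across the boundary, and the remaining task is to match the cusps of $\cMbar$ with the deepest strata of the symmetric cube $(\bP^1)^3/\fS_3$. As both sides are normal and projective and the morphism is birational, establishing bijectivity on the boundary (the content surrounding \pref{sc:P3}) forces it to be an isomorphism; I expect this boundary bookkeeping to be one of the two main obstacles.

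Granting $\cMbar \cong \bP^3$, the four weight-$2$ forms are the coordinates pulled back from $\bP(S_B)$, and since they span $H^0(\bP^3, \mathcal{O}(1))$ they identify the even subring with $\bC[f_1, \dots, f_4]$ and show that the automorphic line bundle satisfies $\cL^{\otimes 2} \cong \mathcal{O}(1)$. Because every $(-2)$-reflection lies in $\rOt^+(Q)$ (using $Q \cong \bfI_{2,3}(2)$, so that all inner products are even) and acts by $-1$ transversally to its mirror, each odd-weight form must vanish along the reflective divisor $\cH$. Hence the odd part of $A(\rOt^+(Q))$ is $H^0(\bP^3, \cL^{\otimes k}(-\tfrac12 \cH))$; as $\deg \cL^{\otimes k} = k/2$ and $\deg \tfrac12 \cH = \tfrac12 \deg \cH$, this is nonzero exactly when $k \ge \deg \cH$. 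The crucial numerical input is therefore $\deg_{\bP^3} \cH = 11$, obtained by summing the contributions of the discriminant of the family (the diagonal where two conics coincide, together with the three hyperplanes $\{Q_i = D_k\}$, each with its correct multiplicity), or equivalently by exhibiting the relevant anti-invariant Borcherds product; this produces a unique weight-$11$ generator $g$ with $\operatorname{div}(g) = \tfrac12 \cH$.

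Finally, $g^2$ is an even form of weight $22$ whose divisor is $\cH$, hence a degree-$11$ polynomial $F_{22}(f_1, \dots, f_4)$, giving the relation $g^2 = F_{22}$. Comparing the resulting Hilbert series $\dfrac{1 + t^{11}}{(1-t^2)^4}$ with $\dim H^0(\bP^3, \cL^{\otimes k})$ shows that $f_1, \dots, f_4, g$ generate $A(\rOt^+(Q))$ with this single relation; this also independently recovers $\cMbar \cong \bP^3$, since the second Veronese subring of $\bC[f_1, \dots, f_4, g]/(g^2 - F_{22})$ is $\bC[f_1, \dots, f_4]$ (the odd part contributes nothing to $\Proj$ because $\gcd(2,11) = 1$). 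The principal difficulty I anticipate is the exact evaluation $\deg \cH = 11$ together with the verification that no further generators or relations occur, namely pinning down the full reflective divisor and the Hilbert series, which is precisely where the construction of $g$, whether as a Borcherds product or as an explicit anti-invariant section on $(\bP^1)^3$, does the real work.
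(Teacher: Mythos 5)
Your proposal reproduces the skeleton of the paper's argument (compactification $\cMbar \cong \bP^3$, coordinates as weight-2 generators, a weight-11 square root of the discriminant equation, one relation in weight 22, Hilbert series check), but the geometric identification it rests on is wrong. For the group $\rOt^+(Q)$ the $P$-polarization remembers \emph{which conic is which}: permuting $Q_1, Q_2, Q_3$ is the simultaneous diagram automorphism of the four $\bfD_4$ blocks, which permutes the three classes $\Qbar_i$ of \pref{lm:P} and hence acts nontrivially on $D_P \cong D_Q$, so it does not lie in $\rOt^+(Q)$. Consequently the double-cover locus of $\cM$ is the space of \emph{ordered} triples, $\cM \setminus \cH_{10} \cong |2H-B|^3 \setminus \Deltab$ (\pref{sc:H10}), and the forgetful map to your $\bP(S_B) \cong (\bP^1)^3/\fS_3$ is generically $6$-to-$1$, not an open immersion; $\bP(S_B)$ would be a moduli space for a strictly larger group. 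You also misread the degenerations. A pair of coincident conics gives a non-reduced branch sextic and supports no pseudo-ample $P$-polarized K3 at all: in the paper the big diagonal is contracted to the three codimension-2 boundary lines of $\cMbar$, and it is the \emph{small} diagonal that blows up into the Heegner divisor $\cH_{10}$, whose points are the elliptic (Kummer) surfaces of \pref{sc:elliptic} with $\cL \cong \cO_Y(s+2f)$ --- surfaces that are not double covers of $\bP^2$ at all. The paper's $\bP^3$ is this blow-up-then-contraction of $|2H-B|^3$ (\pref{sc:P3}), not the symmetric cube, and without the elliptic analysis your compactification and boundary-matching step cannot be carried out.

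This error propagates into what you yourself call the crucial numerical input. In your model the natural count of the reflective locus gives three hyperplanes (one of the conics equals $D_k$) plus the degree-4 discriminant of binary cubics, i.e.\ degree $7$; in the correct model $\cH$ consists of the nine hyperplanes $\cH_1, \ldots, \cH_9$ and the quadric $\cH_{10}$, of total degree $11$. Moreover, the two facts you assume --- that the coordinates of $\bP^3$ have weight $2$ (your $\cL^{\otimes 2} \cong \cO_{\cMbar}(1)$) and that $\deg \cH = 11$ --- are precisely what the paper must prove: the first is \pref{lm:weight}, established by an explicit $\bCx$-equivariance computation of the period map on a family near $\cH_{10}$, and the second then follows by comparing $\omega_\bM \cong \cO_\bM(-3)$ with $\omega_\bM \cong \lb \phi^* \omega_\cM \rb(\bH)$, which simultaneously shows $\cH$ has no components beyond $\cH_1, \ldots, \cH_{10}$ (a fact you would otherwise need an orbit classification or a Borcherds product to obtain). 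Finally, your vanishing argument for odd weights is invalid as stated: invariance under a reflection $s_\delta$ only makes a form even in the coordinate normal to the mirror, and in fact, with the paper's definitions, an odd-weight form invariant under the full group $\rOt^+(Q)$ vanishes identically because $-\id_Q \in \rOt^+(Q)$ acts as $-1 \in \bCx$. One must pass to $\SOt^+(Q)$ as the paper does; there, what forces odd-weight forms to vanish along $\bH$ is the element $-s_\delta \in \SOt^+(Q)$, which fixes the mirror pointwise and coincides there with $-1 \in \bCx$ --- equivalently, the paper's identity $\cO_\bM(-11) \cong \cO_\bM(\bH)$.
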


\section{Elliptic K3 surfaces} \label{sc:elliptic}

Let $\cH_{10}$ be the subvariety
of the moduli space $\cM$
of pseudo-ample $P$-polarized K3 surfaces $(Y, j)$
parametrizing those
containing a $(-2)$-curve $s$
and an elliptic curve $f$
such that $\cL = \cO_Y(s + 2 f)$.
The notation $\cH_{10}$ is in anticipation
of the divisors $\cH_i$ for $i=1,\ldots,9$
appearing in \pref{sc:H10} below.
For such $(Y,j)$,
the sublattice $j \lb \lb \bfD_4 \rb^{\bot 4} \rb$
of $\Pic Y$
must be generated by irreducible components
of fibers,
so that $Y$ must have at least four singular fibers
of Kodaira type $I_0^*$.
This implies that $Y$ is a Kummer surface of product type,
i.e., the minimal model of the quotient
$
(E \times E') / (-1) \times (-1)
$
of the product of a pair of elliptic curves,
and the elliptic fibration comes from the projection
to $E / (-1) \cong \bP^1$.
Such a surface is modeled in \pref{eq:(2,6)-complete intersection}
by $a_0 = 0$,
$
g_2 \in |2H-B| \setminus \{ D_i \}_{i=1}^3
$,
and
\begin{align}
  f_6 = (x_4 + \lambda_1 g_2')(x_4 + \lambda_2 g_2')(x_4 + \lambda_3 g_2')
\end{align}
where
$(\lambda_1, \lambda_2, \lambda_3)$
is a configuration
of three ordered points on $\bA^1$
(the order determines and is determined by the $P$-polarization,
and one can set $(\lambda_1, \lambda_2, \lambda_3) = (0,1,\lambda)$
by translation and rescaling of $x_4$),
and $g_2'$ is any element of $Q_B$
linearly independent from $g_2$.
The choice of $g_2'$ is irrelevant
since $Q_B / \bC \cdot g_2$ is one-dimensional,
and one can choose
the defining equation $d_1$
of $D_1$
as $g_2'$.
The transcendental lattice $T_Y$ of $Y$
is isometric to $T_{E \times E'}(2)$,
which is contained in $\bfU(2)^{\bot 2}$.
The class of the $(-2)$-curve $s$
in $Q \subset H^2(Y;\bZ)$
is a $(-2)$-element,
which we write as $v_{10}$.
The orthogonal lattice of $v_{10}$
in $Q$ is $\bfU(2)^{\bot 2}$,
and $\cH_{10}$ is the divisor
associated with $v_{10}$.
The divisor $\cH_{10}$ is an orthogonal modular variety
for the lattice $\bfU(2)^{\bot 2}$,
which is the product of two copies of the modular curve
$
X(2) \cong \bP^1 \setminus \{ 0, 1, \infty \}
$
of level 2.

\section{The complement of $\cH_{10}$} \label{sc:H10}

Recall from the proof of \pref{lm:P}
that $\ell$ is the generator of $\la 2 \ra$
in $(\bfD_4)^{\bot 4} \bot \la 2 \ra \subset P$.
Let $(\Yt, j)$ be a $P$-polarized K3 surface
such that $\Yt \to |j(\ell)| \cong \bP^2$
is a morphism of degree two.
Then $\Yt$ has four $D_4$-configurations of $(-2)$-curves
which goes to $D_4$-singularities of the branch curve in $\bP^2$.
The four singular points must be in general position by \pref{lm:general position},
so that they can be set to $B$ defined in \pref{eq:B}
by the action of $\Aut \bP^2$.
Note also that the order of the four singular points is determined by the $P$-polarization.
The branch locus must be the union of three distinct conics
$
Q_1, Q_2, Q_3 \in |2H-B| \cong \bP^1
$,
and the order of these three conics is also determined by the $P$-polarization.
Conversely,
any ordered triple $(Q_1, Q_2, Q_3) \in |2H-B|^3 \setminus \Deltab$
determines a $P$-polarized K3 surface,
where $\Deltab \subset |2H-B|^3$ is the big diagonal
where at least two of the three conics are equal,
so that
\begin{align}
\cM \setminus \cH_{10} \cong |2H-B|^3 \setminus \Deltab.
\end{align}
Define divisors $\cH_i$ of $\cM$ for $i=1,\ldots,9$
as the closures in $\cM$ of
\begin{align}
(\cM \setminus \cH_{10}) \cap \cH_{3(i-1)+j}
\cong
\lc (Q_1, Q_2, Q_3) \in |2H-B|^3 \setminus \Deltab
\relmid Q_j = D_k
\rc
\end{align}
for $j,k \in \{1, 2, 3\}$,
where $D_k$ is the union of two lines
defined in \pref{eq:D}.
The Picard lattice of a very general K3 surface
on the divisor $\cH_i$
is an extension of $P \bot \bfA_1$ by $\bZ/2\bZ$,
where $\bfA_1$ comes from the resolution
of the $A_1$-singularity above the node of $D_k$,
and the extension comes from half the strict transform
of an irreducible component of $D_k$
just as in \pref{lm:P}.
The class of the exceptional divisor
for the resolution of the $A_1$-singularity
gives a $(-2)$-element $v_i$ in $Q \subset H^2(Y,\bZ)$,
and $\cH_i$ is the divisor of $\cM$
associated with $v_i$.
The transcendental lattice $v_i^\bot \subset Q$
is the even 2-primary lattice $\bfI_{2,2}(2)$
characterized by $\delta=1$,
$\ell=4$,
and the signature (2,2).

\section{The isomorphism $\cMbar \cong \bP^3$} \label{sc:P3}

Models of $P$-polarized K3 surfaces
in a neighborhood of $\cH_{10}$
can be parametrized by
$
(a_0, g_2, \lambda)
\in
\bC \times \lb |2H-B| \setminus \{ D_i \}_{i=1}^3 \rb \times \lb \bP^1 \setminus \{ 0, 1, \infty \} \rb
$
as
\begin{align}
a_0 x_4 = g_2, \quad
x_5^2 = x_4(x_4 + d_1)(x_4 + \lambda d_1).
\end{align}
When $a_0 \ne 0$,
one can eliminate $x_4$
so that
the corresponding $P$-polarized K3 surface
corresponds to the point
\begin{align}
(g_2, g_2 + a_0 d_1, g_2 + a_0 \lambda d_1)
\in
|2H-B|^3 \setminus \Deltab
\cong
\cM \setminus \cH_{10},
\end{align}
which goes to the small diagonal
\begin{align}
\Deltas = \lc (g_2, g_2, g_2) \in |2H-B|^3 \relmid g_2 \in |2H-B| \rc
\end{align}
as $a_0$ goes to 0.
It follows that the moduli space $\cM$
can be embedded in the blow-up $(|2H-B|^3)^\sim$
of $|2H-B|^3$ along $\Deltas$.
The complement
$
(|2H-B|^3)^\sim \setminus \cM
\cong
(\Deltab)^\sim \setminus (\Deltas)^\sim,
$
where $\Deltab^\sim$ is the pull-back of the big diagonal
and $\Deltas^\sim \cong \cH_{10}$ is the exceptional divisor,
consists of three irreducible components.
By contracting each of these three components to $\bP^1$,
one obtains $\bP^3$.
Hence $\cM$ is an open subvariety of $\bP^3$,
and the complement $\bP^3 \setminus \cM$ is the union of three lines.
Since the complement is of codimension 2,
$\bP^3$ must be isomorphic to the Satake--Baily--Borel compactification $\cMbar$.

To be explicit,
choose a homogeneous coordinate $[s:t]$ of $|2H-B|$
in such a way that $d_1$, $d_2$, and $d_3$ corresponds to
$[1:0]$, $[1:1]$, and $[0:1]$.
Then one can choose a homogeneous coordinate $[u_1:u_2:u_3:u_4]$
of $\cMbar \cong \bP^3$
in such a way that
the birational map
$
|2H-B|^3 \dashrightarrow \cMbar
$
sends
$
([s_1:t_1],[s_2:t_2],[s_3:t_3])
$
to
\begin{align}
[
s_1 s_2 t_3 - t_1 s_2 s_3
:
s_1 t_2 t_3 - t_1 t_2 s_3
:
s_1 s_2 t_3 - s_1 t_2 s_3
:
t_1 s_2 t_3 - t_1 t_2 s_3
].
\end{align}
The big diagonal is contracted as
\begin{align}
([s_1:t_1],[s_2:t_2],[s_1:t_1])
&\mapsto
([0:0:s_1:t_1]), \\
([s_1:t_1],[s_2:t_2],[s_2:t_2])
&\mapsto
([s_2:t_2:0:0]), \\
([s_1:t_1],[s_1:t_1],[s_3:t_3])
&\mapsto
([s_1:t_1:s_1:t_1]),
\end{align}
and the small diagonal is blown up to
\begin{align} \label{eq:quadric}
\lc [u_1:u_2:u_3:u_4] \in \bP^3 \relmid
u_1 u_4 = u_2 u_3 \rc
\cong \bP^1 \times \bP^1.
\end{align}
Under the embedding $\cM \subset \bP^3$,
the divisor $\cH_i$ is a hyperplane if $i \in \{ 1, \ldots, 9 \}$,
and the quadric \pref{eq:quadric} if $i=10$.

% \begin{remark}
% The set of 1-dimensional components
% of the boundary $\cMbar \setminus \cM$
% of the Satake--Baily--Borel compactification
% is bijective
% with the set of $\rOt(Q)$-orbits
% of the set of isotropic sublattices of $Q$
% of rank 2.
% \end{remark}

\section{Proof of \pref{th:main}}
\label{sc:proof}

% \cite{MR2306040,MR2450211}

Let
$
\bM
\coloneqq \ld \cD \middle/ \SOt^+(Q) \rd
\cong \ld \cDt \middle/ \SOt^+(Q) \times \bCx \rd
$
be the orbifold quotient,
where we use 
$\SOt^+(Q) \coloneqq \rOt^+(Q) \cap \SO(Q)$
instead of
$\rOt^+(Q)$,
since the subgroup
$\{ \pm \id_Q \} \subset \rOt^+(Q)$ acts
trivially on $\cD$,
so that one has
$
A_k \lb \rOt^+(Q) \rb
=
A_k \lb \SOt^+(Q) \rb
$
while
$
\ld \cD \middle/ \rOt^+(Q) \rd
$
has $\{ \pm \id_Q \}$ as the generic stabilizer.
For any $k \in \bZ$,
the line bundle
$
\cO_\cD(k) \coloneqq \cO_{\bP(Q)}(k)|_\cD
$
is invariant
under the action of $\SOt^+(Q)$,
and hence descends to a line bundle on $\bM$,
which we write as $\cO_\bM(k)$.
Then the line bundle $\cO_\bM(k)$ is associated
with the trivial line bundle on $\cDt$
equipped with the character
$
\SOt^+(Q) \times \bCx \ni (g, \alpha) \mapsto \alpha^k,
$
so that one has
\begin{align}
A_k \lb \SOt^+(Q) \rb
\cong
H^0 \lb \cO_\bM(-k) \rb.
\end{align}

Recall that the canonical bundle
of a hypersurface $X$
of degree $d$ in $\bP^n$
is given by $\omega_X \cong \cO_X(d-n-1)$.
Since $\cD$ is an open subvariety
defined by $(\Omega,\Omegabar) > 0$
of the quadric hypersurface
defined by $(\Omega, \Omega) = 0$
in the 4-dimensional projective space $\bP(Q \otimes \bC)$,
one has
\begin{align} \label{eq:omega_D}
\omega_\cD \cong \cO_\cD(-3).
\end{align}
Since the isomorphism \pref{eq:omega_D}
is equivariant with respect to the action of $\SOt(Q)$,
one has
\begin{align} \label{eq:omega_bM}
\omega_\bM \cong \cO_\bM(-3).
\end{align}

The structure morphism
$
\phi \colon \bM \to \cM
$
to the coarse moduli space
is an isomorphism in codimension one
outside the image $\bH \coloneqq \bigcup_{\delta \in \Delta(Q)} \bH_\delta$
of the reflection hyperplanes in $\cD$.
Locally near a smooth point of $\bH$,
the orbifold $\bM$ is the quotient of the double cover of $\cM$
branched along $\cH$
by the group of deck transformations
(known as the \emph{root construction}
\cite{MR2306040,MR2450211}
of $\cM$ along $\cH$),
so that
\begin{align} \label{eq:omega_bM2}
\omega_\bM
&\cong \lb \phi^* \omega_\cM \rb \lb \bH \rb
\end{align}
where
\begin{align} \label{eq:omega_cM}
\omega_\cM
\cong
\omega_{\bP^3}|_\cM
\cong
\cO_\cM(-4)
\end{align}
and
$
\cO_\bM(\bH)
$
is the `square root' of $\cO_\cM(\cH)$
(which is the raison d'etre of the root construction);
\begin{align} \label{eq:branch}
\cO_\bM(2 \bH)
\cong
\phi^* \cO_\cM(\cH).
\end{align}

\begin{lemma} \label{lm:weight}
One has $\phi^* \cO_\cM(-1) \cong \cO_\bM(2)$.
\end{lemma}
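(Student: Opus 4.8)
The plan is to pin down the weight by a canonical bundle computation, which determines the answer only up to a cube root of the trivial bundle, and then to remove this ambiguity. Combining the two expressions \pref{eq:omega_bM} and \pref{eq:omega_bM2} for $\omega_\bM$ with \pref{eq:omega_cM} gives
\begin{align}
\cO_\bM(\bH) \cong \cO_\bM(-3) \otimes \phi^* \cO_\cM(4).
\end{align}
Squaring this and comparing with \pref{eq:branch} yields $\cO_\bM(-6) \otimes \phi^* \cO_\cM(8) \cong \phi^* \cO_\cM(\cH)$. Since, under the embedding $\cM \subset \bP^3$ of \pref{sc:P3}, the branch divisor $\cH$ is the sum of the nine hyperplanes $\cH_1, \ldots, \cH_9$ and the quadric $\cH_{10}$, one has $\cO_\cM(\cH) \cong \cO_\cM(11)$, and hence
\begin{align} \label{eq:weight6}
\cO_\bM(6) \cong \phi^* \cO_\cM(-3).
\end{align}

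First I would observe that \pref{eq:weight6} is exactly the cube of the desired identity, so that the line bundle $\cN \coloneqq \cO_\bM(2) \otimes \phi^* \cO_\cM(1)$ satisfies $\cN^{\otimes 3} \cong \cO_\bM$; proving the lemma is therefore equivalent to showing that this $3$-torsion class is trivial. The cleanest route I see is to exploit that $\phi$ realizes $\bM$, in codimension one, as the root construction of order two along $\cH$. Consequently every divisorial orbifold torsion class in $\Pic(\bM)$ has order two, so that the subgroup generated by these classes together with $\phi^* \Pic(\cMbar) = \phi^* \Pic(\bP^3) \cong \bZ$ contains no nonzero $3$-torsion; this forces $\cN \cong \cO_\bM$.

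As an independent check of the factor of three — and to see directly that no $3$-torsion enters from the higher-codimension orbifold locus of $\bM$ — I would restrict to the divisor $\cH_{10}$. By \pref{sc:elliptic} it is the orthogonal modular variety for $\bfU(2)^{\bot 2}$, namely $X(2) \times X(2)$, on each factor of which the weight-one Hodge bundle $\omega$ satisfies $\omega^{\otimes 2} \cong \cO_{X(2)}(1)$; hence $\cO_\bM(-2)$ restricts to $\cO(1,1)$ on $X(2) \times X(2) \cong \bP^1 \times \bP^1$. On the other hand $\cH_{10}$ is the quadric \pref{eq:quadric}, so $\phi^* \cO_\cM(1)$ restricts to $\cO_{\bP^3}(1)|_{\bP^1 \times \bP^1} \cong \cO(1,1)$ as well. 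Thus $\cN$ restricts trivially to $\cH_{10}$, confirming the weight-two normalization and providing a second handle on the triviality of $\cN$.

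The main obstacle is precisely this torsion bookkeeping in the final step: the canonical bundle argument is numerical and sees only the class of $\cN$ modulo cubes, so a rigorous proof must genuinely rule out $3$-torsion in $\Pic(\bM)$. I expect the root-construction observation to do this cleanly once it is checked that the only stacky structure contributing to $\Pic(\bM)$ in codimension one is the order-two ramification along $\bH$, with the restriction to $\cH_{10}$ available to dispose of any residual contribution from the codimension-two orbifold locus.
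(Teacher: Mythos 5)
Your proposal takes a genuinely different route from the paper, but it has two gaps, and the first one is fatal in the context of this paper's logical structure: it is circular.

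The circularity is in the step where you write $\cO_\cM(\cH) \cong \cO_\cM(11)$, i.e.\ where you assert that the branch divisor $\cH = \bigcup_{\delta \in \Delta^+(Q)} \cH_\delta$ equals $\cH_1 \cup \cdots \cup \cH_{10}$. This is not established in \pref{sc:P3}: Sections \ref{sc:elliptic} and \ref{sc:H10} only show that each $\cH_i$ is \emph{contained} in $\cH$ (each $v_i$ is a $(-2)$-vector), and it remains possible a priori that $\Delta(Q)$ has further $\rOt^+(Q)$-orbits contributing extra components of $\cH$. In the paper, the identification $\cH = \bigcup_{i=1}^{10}\cH_i$, equivalently $\deg \cH = 11$, is deduced in \pref{sc:proof} \emph{from} \pref{lm:weight}: one squares \pref{eq:omega} and then uses the lemma to convert $\phi^*\cO_\cM(\deg\cH - 8)$ into $\cO_\bM(16 - 2\deg\cH)$. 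To use the degree of $\cH$ as an input to the lemma you would need an independent classification of the $\rOt^+(Q)$-orbits of $(-2)$-vectors in $Q \cong \bfI_{2,3}(2)$, a lattice computation the paper deliberately avoids.

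Second, even granting $\deg\cH = 11$, your argument only determines the lemma up to a 3-torsion class $\cN$, and your mechanism for killing $\cN$ does not close. The observation that divisorial root classes along $\bH$ have order two constrains only the part of $\Pic(\bM)$ visible in codimension one; since $\cD$ is contractible, torsion line bundles on $\bM = \ld \cD \middle/ \SOt^+(Q) \rd$ correspond to finite-order characters of $\SOt^+(Q)$, and such a character can be nontrivial while its associated line bundle is trivial away from a codimension-two stacky locus (compare $\ld \bA^2 / (\bZ/3\bZ) \rd$, whose Picard group is $\bZ/3\bZ$). Ruling out order-3 characters needs genuine group-theoretic input, e.g.\ that $\SOt^+(Q)$ is generated by 2-torsion elements (reflections) and subgroups on which the character is already known to vanish; your restriction to $\cH_{10}$ only shows triviality on the stabilizer of $v_{10}$, and you do not show that these generate. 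You flag this yourself, so the proposal is an outline rather than a proof at its crucial step. The paper sidesteps both issues at once: it writes down an explicit family of complete intersection models near $\cH_{10}$, realizes the holomorphic volume form as the iterated residue \pref{eq:Griffiths-Dwork}, and computes the exact equivariance $\Pi(\alpha^2\lambda_0,\alpha^2\lambda_1) = \alpha^{-1}\Pi(\lambda_0,\lambda_1)$ of the period map by a weighted rescaling of coordinates. This pins down the weight on the nose, with no torsion ambiguity, and the degree of $\cH$ is then a consequence rather than a hypothesis.
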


\begin{proof}
% It suffices to show that
% the weight of an element of
% $H^0(\cO_\cM(1)) \cong H^0(\cO_{\bP^3}(1))$
% considered as an automorphic form
% on $\cD$
% is two.
As explained in \pref{sc:P3},
the Satake--Baily--Borel compactification $\cMbar \cong \bP^3$
contains the closure
$\cHbar_{10} \cong \Xbar(2) \times \Xbar(2)$
of $\cH_{10}$
as a quadric hypersurface.
It follows that $\cO_{\cMbar}(1)$ restricts to
$\cO_{\Xbar(2)}(1) \boxtimes \cO_{\Xbar(2)}(1)$.
Fix $g_2 \in |2H-B| \setminus \{ D_i \}_{i=1}^3 \cong X(2)$
and consider the family
$
\varphi \colon \cY \subset \bP(\bfq) \times U \to U
$
of complete intersections
in the weighted projective space $\bP(\bfq)$
of weight
$\bfq=(q_1, \ldots, q_5) = (1,1,1,2,3)$
over
$U \coloneqq \Spec \bfk[\lambda_0^{\pm 1}, \lambda_1^{\pm 1}, (\lambda_0-\lambda_1)^{-1}]$
defined by
\begin{align}
g_2(x_1,x_2,x_3) = x_5^2 - x_4(x_4 - \lambda_0 d_1)(x_4 - \lambda_1 d_1) = 0.
\end{align}
Note that $U$ is a principal $\bCx$-bundle
over $\bP^1_{[\lambda_0:\lambda_1]} \setminus \{ 0, 1, \infty \}$.
The minimal model $Y_\lambda$
of the fiber $\varphi^{-1}(\lambda)$
over $\lambda = (\lambda_0,\lambda_1) \in U$
gives a $P$-polarized K3 surface.
The meromorphic differential form
\begin{align} \label{eq:Griffiths-Dwork}
\frac{\sum_{i=1}^5 (-1)^i q_i x_i d x_1 \wedge \cdots \wedge \widehat{d x_i} \wedge \cdots \wedge d x_5}
{g_2(x_1,x_2,x_3)(x_5^2 - x_4(x_4 - \lambda_0 d_1)(x_4 - \lambda_1 d_1))}
\end{align}
on $\bP(\bfq)$
gives a holomorphic volume form $\Omega_\lambda$
on $Y_\lambda$
as the iterated Poincar\'{e} residue,
and
the period map $\Pi$
is a map
from the universal cover $\Utilde$ of $U$
to $\cDt$
sending $\lambda$ to
the class of $\Omega_\lambda$
regarded as an element of $Q \otimes \bC \subset H^2(Y_\lambda, \bC)$.

Now
\pref{lm:weight}
follows from the $\bCx$-equivariance
\begin{align}
\Pi(\alpha^2 \lambda_0, \alpha^2 \lambda_1)
&= \Res \frac{\sum_{i=1}^5 (-1)^i q_i x_i d x_1 \wedge \cdots \wedge \widehat{d x_i} \wedge \cdots \wedge d x_5}
{g_2(x_1,x_2,x_3)(x_5^2 - x_4(x_4 - \alpha^2 \lambda_0 d_1)(x_4 - \alpha^2 \lambda_1 d_1))} \\
&= \Res \frac{\alpha^5 \sum_{i=1}^5 (-1)^i q_i x_i d x_1 \wedge \cdots \wedge \widehat{d x_i} \wedge \cdots \wedge d x_5}
{g_2(x_1,x_2,x_3)(\alpha^6 x_5^2 - \alpha^2 x_4(\alpha^2 x_4 - \alpha^2 \lambda_0 d_1)(\alpha^2 x_4 - \alpha^2 \lambda_1 d_1))} \\
&= \alpha^{-1} \Res \alpha \frac{\sum_{i=1}^5 (-1)^i q_i x_i d x_1 \wedge \cdots \wedge \widehat{d x_i} \wedge \cdots \wedge d x_5}
{g_2(x_1,x_2,x_3)(x_5^2 - x_4(x_4 - \lambda_0 d_1)(x_4 - \lambda_1 d_1))} \\
&= \alpha^{-1} \Pi(\lambda_0, \lambda_1)
\end{align}
of the period map,
where the passage from the first line to the second
comes from the change
$(x_1,x_2,x_3,x_4,x_5) \mapsto (x_1,x_2,x_3,\alpha^2 x_4, \alpha^3 x_5)$
of variables.
\end{proof}

% \begin{remark}
% The proof of \pref{lm:weight} is 
% completely parallel to the proof
% that the coefficients $\lambda_0$ and $\lambda_1$
% of the family
% \begin{align}
%   y^2 = x(x-\lambda_0)(x-\lambda_1)
% \end{align}
% of elliptic curves
% in Legendre form
% are modular forms of level 2 and weight 2.
% This is not a coincidence,
% since the embedding of $X(2)$
% as a factor of $X(2) \times X(2) \cong \cH_{10}$,
% where $\cH_{10}$ is regarded as a quadric in $\cM \subset \bP^3$,
% allows one to identify the weight of the automorphic form for $\rOt(Q)$
% with that of the elliptic modular form.
% \end{remark}

It follows from \pref{eq:omega_bM},
\pref{eq:omega_bM2}, and
\pref{eq:omega_cM} that
\begin{align} \label{eq:omega}
\cO_{\bM}(-3) \cong (\phi^* \cO(-4))(\bH).
\end{align}
By taking the square of \pref{eq:omega}
and using \pref{eq:branch},
one obtains
\begin{align}
\cO_{\bM}(-6) \cong \cO_{\bM}(16-2 \deg \cH),
\end{align}
so that $\deg \cH=11$.
Since $\deg \cH_1=\cdots=\deg \cH_9 = 1$
and $\deg \cH_{10} = 2$,
one obtains
\begin{align}
\cH = \bigcup_{i=1}^{10} \cH_{i}.
\end{align}

The line bundle $\cO_\bM(-1)$ is characterized
as the cubic root of $\omega_\bM$;
\begin{align}
\cO_\bM(-1) \cong \phi^* \lb \cO_\cM(-5) \rb \lb \bH \rb.
\end{align}
One has
\begin{align}
\cO_\bM(-2) &\cong \phi^* \lb \cO_\cM(1) \rb, \\
\cO_\bM(-3) &\cong \phi^* \lb \cO_\cM(-4) \rb \lb \bH \rb, \\
&\vdots \\
\cO_\bM(-10) &\cong \phi^* \lb \cO_\cM(5) \rb, \\
\cO_\bM(-11) &\cong \cO_\bM (\bH),
\end{align}
so that
the graded ring
\begin{align}
A = \bigoplus_{k=0}^\infty H^0 \lb \cO_\bM(-k) \rb
\end{align}
of automorphic forms
is generated by four elements $\bfu_1, \bfu_2, \bfu_3, \bfu_4$ of weight 2
(corresponding to homogeneous coordinates $u_1, u_2, u_3, u_4$ of $\bP^3$)
and one element $\bfh$ of weight 11
(corresponding to the square root of
the defining equation $h \in \bC[u_1,u_2,u_3,u_4]$
of $\cH$)
with one relation $\bfh^2 = h(\bfu_1, \bfu_2, \bfu_3, \bfu_4)$
of weight 22.

\section{K3 surfaces of degree 2 with two rational double points of type $E_8$}
\label{sc:2+E8+E8}

Let $(Y, \cL)$ be a Gorenstein K3 surface of degree 2
with two rational double points of type $E_8$.
Let further $\mu \colon \Yt \to Y$ be the minimal model
and $\cL \coloneqq \mu^* \cL$.
If $\cLt \cong \cO_{\Yt}(s+2f)$
where $f$ is a fiber of an elliptic fibration
and $s$ is a section,
then the elliptic fibration must have two singular fibers of Kodaira type $\mathrm{I\!I}^*$.
If $\varphi_\cL \colon Y \to |\cL| \cong \bP^2$ is a double cover
branched over a sextic $C$,
then $C$ must have two singular points $p_1, p_2$ of type $E_8$,
and the composite $\pi \circ \varphi_\cL \circ \mu \colon \Yt \to \bP^1$,
where $\pi \colon |\cL| \dashrightarrow \bP^1$
is a projection from $p_1$,
gives an elliptic fibration with a section,
a singular fiber of Kodaira type $\mathrm{I\!I}^*$,
and another singular fiber of Kodaira type $\mathrm{I\!I\!I}^*$.
The first case is a degeneration of the second case,
and any Gorenstein K3 surface of degree 2
with two rational double points of type $E_8$
comes from an elliptic K3 surface
with a section,
a singular fiber of Kodaira type $\mathrm{I\!I}^*$
and another singular fiber
of Kodaira type equal to or worse than $\mathrm{I\!I\!I}^*$.
From the point of view of the Picard lattice,
this comes from an isometry
\begin{align} \label{eq:U+E8+E7}
P \coloneqq
\la 2 \ra \bot \bfE_8 \bot \bfE_8
\cong
\bfU \bot \bfE_8 \bot \bfE_7.
\end{align}
An embedding of the lattice $P$
into the K3 lattice $\LambdaK$
is unique up to isometry,
and the orthogonal lattice is
$Q \coloneqq \bfU \bot \bfU \bot \bfA_1$.
Since the automorphism group
of the discriminant group
$Q^\dual/Q \cong \bZ/2\bZ$
is trivial,
one has
$
\rOt(Q)
=
\rO(Q).
$

\begin{remark}
There exist isomorphisms
$
\rO^+(Q) \cong \Sp(4,\bZ)
$
and
$
\cD \cong \big\{ \tau \in \Mat_{2 \times 2}(\bC) \,\big|\,
\tau^{\mathrm T} = \tau
\text{ and } \Im \tau > 0 \big\}
$
which are compatible with the action.
\end{remark}

% Now we discuss the moduli space
% of pseudo-ample $P$-polarized K3 surfaces
% following \cite{MR4356167,MR4478316} closely.
An elliptic K3 surface with a section
admits a Weierstrass model of the form
\begin{align} \label{eq:weierstrass1}
 z^2 = y^3 + g_8(x,w) y + g_{12}(x,w)
\end{align}
in $\bfP(1,4,6,1)$
(cf.~e.g.~\cite[Section 4]{MR2732092}).
Recall
(cf.~e.g.~\cite[Table IV.3.1]{MR1078016})
that the elliptic surface \eqref{eq:weierstrass1}
has a singular fiber of type
\begin{itemize}
\item
$\mathrm{I\!I\!I}^*$
at $0 \in \bP^1_{[x:w]}$
only if
$
\ord_0 g_8(x,w) = 3
$
and
$
\ord_0 g_{12}(x,w) \ge 5,
$
and
\item
$\mathrm{I\!I}^*$
at $\infty \in \bP^1_{[x:w]}$
only if
$
\ord_\infty g_8(x,w) \ge 4
$
and
$
\ord_\infty g_{12}(x,w) = 5,
$
\end{itemize}
so that
\begin{align}
 g_8(x,w) &= u_{4,4} x^4 w^4 + u_{3,5} x^3 w^5,
  \label{eq:weierstrass2} \\
 g_{12}(x,w) &= u_{7,5} x^7 w^5 + u_{6,6} x^6 w^6 + u_{5,7} x^5 w^7
  \label{eq:weierstrass3}
\end{align}
where $u_{7,5} \ne 0$.
The birational map
\begin{align}
\lb \frac{x}{w}, \frac{y}{w^4}, \frac{z}{w^6} \rb
\mapsto \lb \frac{x}{w^6}, \frac{y}{w^{14}}, \frac{z}{w^{21}} \rb
\end{align}
from $\bP(1,4,6,10)$
to $\bP(6,14,21,1)$
sends \pref{eq:weierstrass1}
to
\begin{align} \label{eq:weierstrass4}
z^2 = y^3 + g_{28}(x,w) y + g_{42}(x,w)
\end{align}
with
\begin{align}
g_{28}(x,w) &= u_{4,4} x^4 w^4 + u_{3,5} x^3 w^{10}, \\
g_{42}(x,w) &= u_{7,5} x^7 + u_{6,6} x^6 w^{6} + u_{5,7} x^5 w^{12}.
\end{align}
One can set $u_{7,5} = 1$
by rescaling $x$, $y$, and $z$,
and write
\begin{align}
g_{28}(x,w) &= t_4 x^4 w^4 + t_{10} x^3 w^{10}, \\
g_{42}(x,w) &= x^7 + t_6 x^6 w^{6} + t_{12} x^5 w^{12}.
\end{align}
This model has a singularity worse than
rational double points
on the fiber at $a \in \bfP^1$
if and only if $\ord_a(g_8) \ge 4$ and
$\ord_a(g_{12}) \ge 6$
(cf.~e.g.~\cite[Proposition I\!I\!I.3.2]{MR1078016}).
This can happen only for $a = 0$,
and this happens for $a=0$
if and only if
$t_{10}=t_{12}=0$.
The parameter
\begin{align}
 t = (t_4, t_6, t_{10}, t_{12})
  \in U \coloneqq \bC^4 \setminus
   \lc t_{10}=t_{12}=0 \rc
\end{align}
is unique up to the action of $\bCx$
given by
\begin{align}
 \bCx \ni \lambda \colon
  ((x,y,z,w),(t_i)_{i}) \mapsto ((x,y,z,\lambda^{-1} w), (\lambda^i u_i)_{i}),
\end{align}
which rescales the holomorphic volume form
\begin{align}
 \Omega= \Res \frac{w dx \wedge dy \wedge dz}{z^2 - y^3 - g_{28}(x,w;u) y - g_{42}(x,w;u)}
\end{align}
as
\begin{align} \label{eq:equivariance}
\Omega_{\lambda u}
= \lambda^{-1} \Omega_u.
\end{align}
The categorical quotient
$
 T \coloneqq U / \bCx_\mu
$
is the coarse moduli scheme of pairs
$(Y, \Omega)$ consisting of a $\bfU \bot \bfE_8 \bot \bfE_7$-polarized K3 surface $Y$
and a holomorphic volume form $\Omega \in H^0(\omega_Y)$ on $Y$.
The boundary of the affinization
$
 \Tbar \coloneqq \Spec \bC[T]
$
is given by
\begin{align}
 \lc t_{10} = t_{12} = 0 \rc \cong \bC_{t_4} \times \bC_{t_6}.
\end{align}
The coarse moduli space $\cM \cong \cD/\rO(Q)$
of pseudo-ample $P$-polarized K3 surfaces
is isomorphic to $T/\bCx$,
which is an open subvariety
of the weighted projective space
$\Tbar/\bCx \cong \bP(4,6,10,12)$.

The locus $\cH \subset \cM$
where the $P$-polarization is not ample
consists of two irreducible components;
$\cH = \cH_1 \cup \cH_2$.
The locus $\cH_1$ is where the singular fiber at $0 \in \bP^1_{[x:w]}$
is of Kodaira type $\mathrm{I\!I}^*$,
so that the Picard group contains $\bfU \bot \bfE_8 \bot \bfE_8$.
This divisor $\cH_1$
is defined by $t_{10} = 0$.
The other locus $\cH_2$ is where
there is a singular fiber of Kodaira type $\mathrm{I}_2$,
so that the Picard group contains $\bfU \bot \bfE_8 \bot \bfE_7 \bot \bfA_1$.
As explained in \cite[Section 6]{MR4356167},
this divisor is defined by
$k_{60}(t) \coloneqq \Delta_{120}(t)/r_{20}(t)^3$,
which is a weighted homogeneous polynomial of degree $60$,
where $\Delta_{120}$ and $r_{20}$ are defined as follows:
The discriminant
$4 g_{28}(x,w)^2 + 27 g_{42}(x,w)^3$
of the right hand side of \pref{eq:weierstrass4}
as a polynomial in $y$
is $x^9 w^{84}$ times a polynomial $f_{60}(x/w^6)$ of degree 5 in $x/w^6$,
which is homogeneous with respect to the weight of $t$
if we set the weight of $x/w^6$ to be $6$.
In other words, one has $f_{60}(x) = \prod_{i=1}^5 (x - \alpha_i)$
with $\deg \alpha_i = 6$.
It follows that the discriminant
$
\Delta_{120}(t) \coloneqq \prod_{1 \le i < j \le 5} (\alpha_i-\alpha_j)^2
$
of $f_{60}(x)$ is a weighted homogenous polynomial of degree
$2 \times \binom{5}{2} \times 6 = 120$ in $t$.
The resultant
\begin{align}
r_{20}(t) \coloneqq
\begin{vmatrix}
t_4 & t_{10} \\
 & t_4 & t_{10} \\
1 & t_6 & t_{12}
\end{vmatrix}
\end{align}
is a weighted homogeneous polynomial of degree $20$.
The locus defined by $\Delta_{120}(t)$
is where two singular fibers of Kodaira type $\mathrm{I}_1$ collide,
and the locus $r_{20}(t)$
is where there is a singular fiber of Kodaira type $\mathrm{II}$.

The orbifold quotient
$
\bM \coloneqq \ld \cD \middle/ \SO^+(Q) \rd
$
has a generic stabilizer of order 2
along the divisor $\bH = \bH_1 \cup \bH_2$
corresponding to $\cH = \cH_1 \cup \cH_2$.
One has
\begin{align}
\omega_\bM
&\cong \cO_\bM(-3), \\
% \end{align}
% \begin{align}
% \end{align}
% and
% \begin{align}
\omega_\bM
&\cong \lb \phi^* \omega_\cM \rb (\bH), \\
\omega_\cM
&\cong \cO_\cM(-4-6-10-12)
\cong \cO_\cM(-32),
\end{align}
where $\phi \colon \bM \to \cM$ is the structure morphism
to the coarse moduli space
and
$\cO_\bM(\bH)$ is a line bundle
satisfying
\begin{align}
\cO_\bM(2 \bH)
&\cong \phi^* \lb \cO_\cM(\cH) \rb
\cong \phi^* \lb \cO_\cM(70) \rb.
\end{align}
It follows from \pref{eq:equivariance} that
\begin{align}
  \varphi^*(\cO_\cM(1)) \cong \cO_\bM(-1).
\end{align}
The line bundle $\cO_\bM(-1)$ is characterized
as the unique cubic root of $\omega_\bM$:
\begin{align}
\cO_\bM(-1)
\cong
\phi^*(\cO_\cM(-34))(\bH).
\end{align}
Note that
\begin{align}
H^0 \lb \cO_\bM(-2) \rb
\cong
H^0 \lb \phi^*(\cO_\cM(-68 + 70)) \rb
\cong
H^0 \lb \cO_\cM(2) \rb
\end{align}
and
\begin{align}
H^0 \lb \cO_\bM(-35) \rb
\cong
H^0 \lb \cO_\bM (\bH) \rb.
\end{align}
% The automorphic forms
% $\bft_{10}$ and $k_{60}(\bft)$
% corresponding to $t_{10}$ and $k_{60}(t)$
% have square roots $\bfs_5$
% and $\bfs_{30}$,
% which are automorphic forms with the character
% $
% \rO^+(Q)
% \cong \Sp(2,\bZ)
% \to \Sp(2, \bZ/2\bZ)
% \cong \fS_6
% \xto{\operatorname{sgn}} \bZ/2\bZ
% $
% and can be
% explicitly described
% in terms of theta constants
% \cite{MR168805}.
% The product $\bfs_5 \bfs_{30}$ is a automorphic form without a character,
% which is the Igusa cusp form of weight 35.
The graded ring $A(\rO^+(Q))$ of automorphic forms
is generated by
$\bft_4$,
$\bft_6$,
$\bft_{10}$,
$\bft_{12}$, and
$\bfs_{35}$
with relation
${\bfs_{35}}^2 = \bft_{10} k_{60}(\bft)$
\cite[Theorem 3]{MR168805}.

\section*{Acknowledgement}

I thank Kenji Hashimoto
for collaboration
at an early stage of this research;
this paper was originally conceived
as a joint project with him.
I was supported
by JSPS Grants-in-Aid for Scientific Research No.21K18575.

\bibliographystyle{amsalpha}
\bibliography{bibs}

\def\cprime{$'$} \def\cprime{$'$}
\providecommand{\bysame}{\leavevmode\hbox to3em{\hrulefill}\thinspace}
\providecommand{\MR}{\relax\ifhmode\unskip\space\fi MR }
% \MRhref is called by the amsart/book/proc definition of \MR.
\providecommand{\MRhref}[2]{%
  \href{http://www.ams.org/mathscinet-getitem?mr=#1}{#2}
}
\providecommand{\href}[2]{#2}
\begin{thebibliography}{AGV08}

\bibitem[AGV08]{MR2450211}
Dan Abramovich, Tom Graber, and Angelo Vistoli, \emph{Gromov-{W}itten theory of {D}eligne-{M}umford stacks}, Amer. J. Math. \textbf{130} (2008), no.~5, 1337--1398. \MR{2450211 (2009k:14108)}

\bibitem[Cad07]{MR2306040}
Charles Cadman, \emph{Using stacks to impose tangency conditions on curves}, Amer. J. Math. \textbf{129} (2007), no.~2, 405--427. \MR{2306040 (2008g:14016)}

\bibitem[CD12]{MR2935386}
Adrian Clingher and Charles~F. Doran, \emph{Lattice polarized {K}3 surfaces and {S}iegel modular forms}, Adv. Math. \textbf{231} (2012), no.~1, 172--212. \MR{2935386}

\bibitem[Dol83]{MR728992}
Igor Dolgachev, \emph{Integral quadratic forms: applications to algebraic geometry (after {V}. {N}ikulin)}, Bourbaki seminar, {V}ol. 1982/83, Ast\'erisque, vol. 105, Soc. Math. France, Paris, 1983, pp.~251--278. \MR{728992 (85f:14036)}

\bibitem[Dol96]{MR1420220}
I.~V. Dolgachev, \emph{Mirror symmetry for lattice polarized {$K3$} surfaces}, J. Math. Sci. \textbf{81} (1996), no.~3, 2599--2630, Algebraic geometry, 4. \MR{1420220 (97i:14024)}

\bibitem[Dol12]{MR2964027}
Igor~V. Dolgachev, \emph{Classical algebraic geometry}, Cambridge University Press, Cambridge, 2012, A modern view. \MR{2964027}

\bibitem[HU22]{MR4356167}
Kenji Hashimoto and Kazushi Ueda, \emph{The ring of modular forms for the even unimodular lattice of signature {$(2,10)$}}, Proc. Amer. Math. Soc. \textbf{150} (2022), no.~2, 547--558. \MR{4356167}

\bibitem[Igu64]{MR168805}
Jun-ichi Igusa, \emph{On {S}iegel modular forms genus two. {II}}, Amer. J. Math. \textbf{86} (1964), 392--412. \MR{168805}

\bibitem[May72]{MR330172}
Alan~L. Mayer, \emph{Families of {$K-3$} surfaces}, Nagoya Math. J. \textbf{48} (1972), 1--17. \MR{330172}

\bibitem[Mir89]{MR1078016}
Rick Miranda, \emph{The basic theory of elliptic surfaces}, Dottorato di Ricerca in Matematica. [Doctorate in Mathematical Research], ETS Editrice, Pisa, 1989. \MR{1078016 (92e:14032)}

\bibitem[Nik79a]{MR544937}
V.~V. Nikulin, \emph{Finite groups of automorphisms of {K}\"ahlerian {$K3$} surfaces}, Trudy Moskov. Mat. Obshch. \textbf{38} (1979), 75--137. \MR{544937 (81e:32033)}

\bibitem[Nik79b]{MR525944}
\bysame, \emph{Integer symmetric bilinear forms and some of their geometric applications}, Izv. Akad. Nauk SSSR Ser. Mat. \textbf{43} (1979), no.~1, 111--177, 238. \MR{525944 (80j:10031)}

\bibitem[Sha80]{MR595204}
Jayant Shah, \emph{A complete moduli space for {$K3$} surfaces of degree {$2$}}, Ann. of Math. (2) \textbf{112} (1980), no.~3, 485--510. \MR{595204}

\bibitem[SS10]{MR2732092}
Matthias Sch{\"u}tt and Tetsuji Shioda, \emph{Elliptic surfaces}, Algebraic geometry in {E}ast {A}sia---{S}eoul 2008, Adv. Stud. Pure Math., vol.~60, Math. Soc. Japan, Tokyo, 2010, pp.~51--160. \MR{2732092 (2012b:14069)}

\end{thebibliography}

\end{document}